\newcommand{\Fn}{\widehat{F}}
\newcommand{\rn}{\widehat{r}}
\newcommand{\dldp}{\frac{\partial{l(D;\theta,\gamma)}}{\partial{\gamma}}}
\newcommand{\dldt}{\frac{\partial{l_{partial}(D;\theta,\gamma)}}{\partial{\gamma}}}
\newtheorem{thm}{Theorem}%[section]
\newtheorem{lem}{Lemma}
\def\spacingset#1{\renewcommand{\baselinestretch}%
	{#1}\small\normalsize} \spacingset{1}
\begin{document}

	\begin{frontmatter}
		
		% "Title of the paper"

		\title{Self-reporting and screening: Data with  current-status and censored observations}
		\maketitle
		\runtitle{Self-reporting and screening}
		%\thankstext{T1}{The authors were funded in part by NSF grant DMS-1407732.}
		
		\begin{aug}
			\author[1]{\fnms{Jonathan} \snm{Yefenof}\ead[label=e1]{jonathan.yefenof@mail.huji.ac.il}}
			 \author[2]{\fnms{Yair}
				\snm{Goldberg}\ead[label=e2]{ygoldberg@stat.haifa.ac.il}}
			\author[3]{\fnms{Jennifer}
				\snm{Wiler}\ead[label=e3]{ygoldberg@stat.haifa.ac.il}}
			 \author[4]{\fnms{Avishai}
			 	\snm{Mandelbaum}\ead[label=e3]{ygoldberg@stat.haifa.ac.il}}
			 and \author[1,5]{\fnms{Ya'acov}
			 	\snm{Ritov}\ead[label=e1]{ygoldberg@stat.haifa.ac.il}}
			
			\affiliation[1]{The Hebrew University of Jerusalem}
				\affiliation[2]{University of Haifa}
				\affiliation[3]{University of Colorado}
			\affiliation[4]{Technion - Israel Institute of Technology}
		\affiliation[5]{University of Michigan}
		
%			\affiliation{University of Haifa}
			
%			\address{Jonathan Yefenof\\Department of Statistics
%				\\The Hebrew University of Jerusalem\\ Mount Scopus,  Jerusalem, Israel\\ \printead{e1}\\
%				\phantom{E-mail:\ }\printead*{e2} }

			\runauthor{Yefenof et al.}
		\end{aug}
		\begin{abstract}
			
		We consider survival data that combine three types of  observations: uncensored, right-censored, and left-censored. Such data arises from screening a medical condition, in situations where self-detection arises naturally. Our goal is to estimate the failure-time distribution, based on these three observation types. We propose a novel methodology for distribution estimation using both parametric and nonparametric techniques.  We then evaluate the performance of these estimators via simulated data. Finally, as a case study, we estimate the patience of patients who arrive at an emergency department and wait for treatment. Three categories of patients are observed: those who leave the system and announce it, and thus their patience time is observed; those who get service and thus their patience time is right-censored by the waiting time;  and those who leave the system without announcing it. For this third category, the patients' absence is revealed only when they are called to service, which is after they have already left; formally, their patience time is left-censored. Other applications of our proposed methodology are discussed.

%		{\bf Keywords:}  Survival analysis, Left censoring, Right censoring, Current status data, Nonparametric estimation.
		\end{abstract}

		%\begin{keyword}[class=AMS]
		%\kwd[Primary ]{62G05}
		%\kwd[; secondary ]{ 62G20}\kwd{62N02}
		%\end{keyword}
		
		\begin{keyword}
			\kwd{Current status data}
			\kwd{Left and Right-censoring}
			\kwd{Nonparametric estimation}
			\kwd{Survival analysis}
			\kwd{Left without being seen}
		\end{keyword}
	\end{frontmatter}

\section{\label{sec:Introduction}Introduction}
We study the estimation of failure time distribution where the failure times can be either observed directly, or be right-censored or left-censored. This type of survival data arises, for example, in estimation of time to the appearance of a medical condition where characteristic symptoms may or may not appear when the condition exists. Specific medical settings include relapse in childhood brain tumors, which may be observed due to clinical symptoms, or right-censored due to periodic screening with negative result (no tumor), or left-censored due to periodic screening with a positive result  \citep{minn2001surveillance}. Another medical setting is melanoma cancer, which is observed if self-detected, or is right censored due to a negative screening (no melanoma), or  left-censored if it goes undetected until screening. Additional examples  can be found in \citet{whitehead1989analysis}.

The motivating example for this work comes from estimating customer patience in service system which, as discussed by \citet{mandelbaum2007service}, is a challenging problem. In our study, we focus on patients who wait for treatment in an emergency department (ED). Three categories of patients are observed. The first category consists of patients who get service and thus their patience time is right-censored by the waiting time. The second category comprises those who leave the system and announce it, and thus their patience time is observed while the waiting time is right-censored. The third category consists of patients who leave the system without announcing it; their absence is hence revealed only when they are called to service, which is after they have already left; formally, their patience time is left-censored.

Estimating the patience time is of importance as the  decision of patients to leave the system before getting served might have a strong effect on their physical well-being. There has been considerable research on the reasons why patients leave an ED before being served; see \citet{baker_patients_1991}, \citet{hunt_characteristics_2006}, \citet{bolandifar2014modeling}, and \citet{batt2015waiting}. However, these and other authors have not proposed a model by which ED patience time - namely the duration that a potential patient is willing to wait for ED service - can be estimated, and this is our goal here.

We propose novel parametric and nonparametric estimators of the unknown survival function for this 3-type survival data. We then study their rates of convergence.
The parametric estimator is based on both full and partial likelihoods. We provide condition under which the parametric estimator is a linear asymptotic normal (LAN) estimator and converges to a normal distribution in a root-$n$ rate.	The nonparametric estimator is based on nonparametric kernel estimators for density functions and on a novel estimator of the cumulative probability function that has some similarities to the Nelson--Aalen estimator \citep[e.g.,][Chapter 4]{klein_survival_2013}.
We show that, under some regularity conditions, the nonparametric estimator point-wise converges to the normal distribution.

We perform a simulation study and compare the proposed parametric and nonparametric estimators. For the parametric model, we study both correct and misspecified models and show the different corresponding results. We show how the accuracy changes with  sample size. We then carry out a case study that is based on data of patients waiting for treatment in an ED, in the U.S.\ in 2008.
We analyzed separately different severity levels (15106  observations in the emergency group, 43600 in the urgent group, and 26541 in the semi-urgent group). We conclude with a comparison of the parametric and nonparametric estimators for the three different severity levels of this dataset. 

\section{Brief Literature Review}\label{sec:Literature}
Developing screening methods for medical conditions, such as breast and melanoma cancers, has a long history \citep{wilson1968principles,zelen1969theory}. In the classical setting, the medical condition either already exists at the time of screening and is thus left-censored, or does not exist, and is thus right-censored. The setting in which  self-detection is possible, and thus the condition time is observed, has been surprisingly  mostly ignored in the literature. For example, \citet{minn2001surveillance} treat both self-detection times and screening times as event times, ignoring the censoring. The closest model to the one that we  present here appears in \citet{whitehead1989analysis}. It is assumed there that the condition can be detected at screening or before screening due to symptoms. In both cases, the condition already exists at the time of detection. It is also assumed that screenings take place at a sequence of fixed time points. \citet{whitehead1989analysis} recommends to ignore the extra knowledge gained due to self-reporting and to replace these times with the time of the next screening. The survival function is then estimated only at the discrete fixed screening times using standard techniques \citep{prentice1978regression}.

There has been considerable research effort, dedicated to modeling and analysis of customer (im)patience while waiting for service. Here we describe several papers that, together with references therein, provide what is required for a historical background and state-of-art perspective. First, we recommend the literature review (Section 3) in the recent \cite{batt2015waiting}, accompanied by \citet{gans2003telephone}: these survey patience-research from an operational/queueing view point (mainly Section 6.3.3 in the latter), while connecting it to the medical literature on LWBS (mainly Section 3 in the former); see also \citet{aksin2007modern} who, relative to \citet{gans2003telephone}, expand on managerial challenges.  Next we mention \citet{mandelbaum2013data}, which is an Explanatory Data Analysis of (im)patience in telephone call centers (that appears in a special issue that is devoted to models of queues abandonment). Finally, and the most related to the present study, are the following two studies. \citet{brown2005statistical} applies, in Section 5, the Kaplan--Meier estimator \citep{kaplan1958nonparametric} to estimate the survival functions and consequently hazard rates, of both virtual waiting time and impatience; the data is that of a call center, in which times of abandonment are all recorded hence the data is right-censored. Then \citet{wiler2013emergency}, which is also the source of our present ED data case study, estimate LWBS rates as a function of ED patient arrival rates, treatment times, and ED boarding times. There was no attempt in that work to estimate the patience-time distribution.

We conclude this brief survey with the observation that the estimation of customer (im)patience is relevant beyond screening, call centers, and EDs. For example, \citet{nah2004study} studies tolerance of Web users (during information retrieval). \citet{YomTov2017} analyzes chat services, in which customers abandon at any phase during chat-exchanges with a service center: one expects that such services give rise to the same options as in EDs: some customers receive service, others abandon without letting anyone know, and the rest announce their abandonment time.   

	\section{The Model}\label{sec:Model}

In the standard setting of right-censored data one observes, for each patient, either the failure time or the censoring time. In terms of our motivating example, failure time is patience time while censoring time is the waiting time. Patience time is observed when patients leave the ED while informing the system of their departure; waiting time is observed when a patient is called for service. However, unlike in standard right-censored data and like in current status data, there are also patients who leave without informing; in this case their absence is observed only when they are called for service, and this latter time provides an upper bound for their patience time.  In other words, the (virtual) waiting time is observed, and the only information on patience time is that it is less than this observed waiting time.  Hence, in this case, the patience time is left-censored.

More formally, let $T$ be the patient's failure time, i.e., the time until the patient loses patience. Let $W$ be the censoring time, i.e., the waiting time until the patient gets (or could have gotten) service. We assume that $T$ has a cumulative distribution function (cdf) $F$ and a probability density function (pdf) $f$, and that $W$ has cdf $G$ and pdf  $g$.  Let $\Delta$ be the indicator  $\Delta\equiv1{\{T< W\}}$; i.e., $\Delta=1$ if the patient loses patience before being called to service, and $\Delta=0$ otherwise.

Let $Y$ be the indicator that is $1$ for a patient who leaves and informs when leaving, and $0$ otherwise. Denote by $q(t)$ the conditional probability that a patient reports leaving given that the waiting time equals to $t$. In other words, $q(t)=pr(Y=1\mid{T=t})$.
We assume that the waiting time $W$ and the patience time $T$ are independent. This assumption, which is common in the right-censored data literature \citep[see,][Chapter 3, pages 65-66]{klein_survival_2013}, seems appropriate in our case study, as we stratify by acuity levels.
We also assume that announcement indicator $Y$ is independent of the waiting time $W$, as it seems reasonable that the decision of a patient to report when leaving does not depend on the waiting time. 
Summarizing, we assume that the pair $(Y,T)$ is independent of the waiting time $W$. When this assumption does not hold, different theoretical tools are needed for a valid estimation. 

Let $U$ be the recorded time: $U\equiv{Y}{T}+(1-Y){W}$. The observed data consist of the triplets $(U_i,Y_i,\Delta_i)$, $i=1,\ldots,n$, and there are three categories of patients:

\begin{description}
	\item[$\mathcal{C}=1$:] The patient gets service, hence the waiting time is observed, which serves as a lower bound on the patience time; thus the patience time is right censored. Formally,  $\Delta=0$, $Y=0$, and $U=W$.
	\item[$\mathcal{C}=2$:] The patient leaves without being treated and reports departure. The patience time is thus revealed:  $Y=1$, $\Delta=1$, and $U=T$.
	\item[$\mathcal{C}=3$:] The patient leaves without reporting, hence virtual waiting time (the time that the patient would have waited had he stayed in the ED) is observed, which provides an upper bound for the patience time, thus the patience time is left-censored. Formally, $Y=0$, $\Delta=1$, and  $U=W$.

\end{description}

\begin{lem}\label{lem:probablities_connections}
	The following equalities hold:
	\begin{enumerate}[i)]
		\item\label{a}
		$pr(U\leq{t},\mathcal{C}=1)=\int_{0}^{t}g(w)\overline{F}(w)dw$.
		
		\item \label{b}
		$pr(U\leq{t},\mathcal{C}=2) =\int_{0}^{t}q(w)f(w) \overline{G}(w)dw$.
		
		\item\label{c}
		$pr(U\leq{t},\mathcal{C}=3) = \int_{0}^{t} g(w)\int_{0}^{w} \left\lbrace1-q(x)\right\rbrace f(x)dxdw$.

	\end{enumerate}
\end{lem}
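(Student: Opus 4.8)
The plan is to treat each of the three cases separately, and in each instance to rewrite the event $\{U\le t,\,\mathcal{C}=j\}$ entirely in terms of the underlying variables $(T,W,Y)$, and then to use the assumed independence of the pair $(Y,T)$ from $W$ to reduce the joint probability to a one-dimensional integral by conditioning on the appropriate coordinate.

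First I would record two bookkeeping facts that follow directly from the definitions, since they are what makes the case descriptions in the $\mathcal{C}=1,2,3$ list consistent. A patient can announce leaving only if she in fact leaves, so $\{Y=1\}\subseteq\{\Delta=1\}=\{T<W\}$; conversely, on $\{\Delta=0\}=\{T\ge W\}$ the patient is served and cannot leave, so $Y=0$ there. Hence $\{\mathcal{C}=1\}=\{T\ge W\}$, $\{\mathcal{C}=2\}=\{T<W,\,Y=1\}$, and $\{\mathcal{C}=3\}=\{T<W,\,Y=0\}$, with $U=W$ on the first and third events and $U=T$ on the second.

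Then the three computations go as follows. For (i), $\{U\le t,\,\mathcal{C}=1\}=\{W\le t,\,T\ge W\}$; conditioning on $W=w$ and using $T\perp W$ gives $\int_0^t pr(T\ge w)\,g(w)\,dw=\int_0^t \overline{F}(w)g(w)\,dw$. For (ii), $\{U\le t,\,\mathcal{C}=2\}=\{T\le t,\,W>T,\,Y=1\}$; conditioning on $T=w$ and using that $W$ is independent of $(Y,T)$ factors the conditional probability as $pr(W>w)\,pr(Y=1\mid T=w)=\overline{G}(w)q(w)$, which yields $\int_0^t q(w)f(w)\overline{G}(w)\,dw$. For (iii), $\{U\le t,\,\mathcal{C}=3\}=\{W\le t,\,T<W,\,Y=0\}$; conditioning on $W=w$ and again invoking $(Y,T)\perp W$ reduces the inner probability to $pr(T<w,\,Y=0)$, and one more conditioning on $T$ together with $pr(Y=0\mid T=x)=1-q(x)$ gives $pr(T<w,\,Y=0)=\int_0^w\{1-q(x)\}f(x)\,dx$; substituting back produces $\int_0^t g(w)\int_0^w\{1-q(x)\}f(x)\,dx\,dw$.

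The calculations themselves are elementary. The only steps that require care are the translation of the events $\{\mathcal{C}=j\}$ in the second paragraph — in particular that $Y=0$ is forced on $\mathcal{C}=1$ and that $\Delta=1$ is automatic on $\mathcal{C}=2$ — and, in cases (ii) and (iii), applying the independence of $(Y,T)$ and $W$ in the right direction so that the factor $\overline{G}(w)$ (respectively the inner integral over $T$) is attached to the correct density. I do not expect any single step to be a genuine obstacle.
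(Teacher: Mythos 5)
Your proposal is correct and follows essentially the same route as the paper: in each case you rewrite $\{U\le t,\,\mathcal{C}=j\}$ in terms of $(T,W,Y)$, condition on $W$ (cases i and iii) or on $T$ (case ii), and invoke the independence of $(Y,T)$ and $W$ to factor, exactly as in the paper's proof. The preliminary bookkeeping about which values of $(Y,\Delta)$ each category forces is a harmless elaboration of the category definitions already given in Section 3.
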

See the proof in \ref{l:2}.

For  $i=1,2,3$, we introduce the following sub-stochastic distribution functions
\begin{align}\label{key}
h_{i}(t):=\frac{d}{dt}pr(U\leq{t},\mathcal{C}=i).
\end{align}
From Lemma \ref{lem:probablities_connections} above, we deduce that
\begin{align*}
h_1(t)= g(t)\overline{F}(t),\quad\ h_{2}(t)=q(t)f(t)\overline{G}(t),\quad\ h_{3}(t)=g(t)\int_{0}^{t}\left\lbrace{}1-q(x)\right\rbrace{}f(x)dx.
\end{align*}
Here, $\overline{F}(t)=1-F(t)$ and $\overline{G}(t)=1-G(t)$ are the survival functions of the patience time and the waiting time, respectively.

Define
\begin{align}\label{def}
r_{1}(t)\equiv \frac{h_{1}(t)}{pr(W\leq{T})},\quad r_{2}(t)\equiv\frac{h_{2}(t)}{pr(Y=1,W>T)},\quad
r_{3}(t)\equiv \frac{h_{3}(t)}{pr(Y=0,W>T)}.
\end{align}
Then  $r_{i}$ is the density function of the observed time $U$ given $\mathcal{C}=i$. Our model assumes that all denominators are positive. 

To summarise what is known and what is to be estimated,  there are two unknown distributions in our setting, $G$ and $F$, and we aim to estimate them using both parametric and nonparametric techniques. For each patient, the waiting time is either observed or right censored. If the patient reports and then leaves, the waiting time is longer than the observed patience time. Hence, the waiting time is right-censored. Therefore, parametric and nonparametric estimation for the distribution of waiting time $W$ can be done by standard techniques for right-censored data. However, estimation of the distribution of patience time $T$, is more complicated and is discussed in Sections \ref{sec:par} and \ref{sec:nonpar}.

	\section{Parametric estimation}\label{sec:par}
Assume now that the distributions of both the patience time and the waiting time belong to some parametric families. More formally, let $\mathcal{F}=\{f(\cdot;\theta), \theta\in\Theta\}$ where $\Theta\subseteq\mathbb{R}^d$, $\mathcal{G}=\{g(\cdot;\gamma), \gamma\in{\Gamma}\}$ where $\Gamma\subseteq \mathbb{R}^p$. We assume that the density of the patience time can be written as $f(t;\theta_{0})\in\mathcal{F}$. We also assume that the density of the waiting time can be written as $g(t;\gamma_{0})\in\mathcal{G}$.
Write
$h_1(t;\theta,\gamma)\equiv{g}(t;\gamma)\overline{F}(t;\theta)$,
and similarly $h_2(t;\theta,\gamma)\equiv{q(t)}{f}(t;\theta)\overline{G}(t;\gamma)$ and $h_3(t;\theta,\gamma)\equiv{g}(t;\gamma)\int_{0}^{t}\left\lbrace{}1-q(x)\right\rbrace{}f(x;\theta)dx$.

The likelihood of the observed data $D=\{(U_i,Y_i,\Delta_i), i=1,\ldots,n\}$ can be written in terms of the functions $h_1$, $h_2$, and $h_3$, as follows:
\begin{align*}
L(D;\theta,\gamma)= \prod_{i=1}^{n}\left\lbrace{h}_{1}(U_{i};\theta,\gamma)\right\rbrace{ }^{1-\Delta_{i}}\left\lbrace{}h_{2}(U_{i};\theta,\gamma)\right\rbrace {}^{\Delta_{i}Y_{i}}\left\lbrace{}h_{3}(U_{i};\theta,\gamma)\right\rbrace {}^{\Delta_{i}(1-Y_{i})}.\end{align*}
Using the explicit representations of $h_1$, $h_2$, $h_3$, we obtain that $L(D;\theta,\gamma)$ is given by
\begin{align*}
\prod_{i=1}^{n}&\Big(\left\lbrace{}g(U_{i};\gamma)\overline{F}(U_{i};\theta)\right\rbrace{}^{1-\Delta_{i}}\left\lbrace{}q(U_{i})f(U_{i};\theta)\overline{G}(U_{i};\gamma)\right\rbrace{}^{\Delta_{i}Y_{i}}
\\
&\times \left[ g(U_{i};\gamma)\int_{0}^{U_{i}}{\left\lbrace{} 1-q(s)\right\rbrace f(s;\theta)ds}\right] ^{\Delta_{i}(1-Y_{i})}\Big).
\end{align*}
The value of $\gamma$ that maximizes this likelihood is independent of $\theta$. Therefore, a maximum likelihood estimator (MLE) $\widehat{\gamma}_{n}$ to $\gamma_{0}$ can be constructed from this likelihood.
However, maximizing the likelihood with respect to $\theta$ is difficult. Even if $\gamma$ is given or estimated, the maximizer of $\theta$ depends on the unknown function $q(t)$.  Therefore, we consider the partial likelihood  $L_{partial}(D;\theta;\gamma)$ of category $\mathcal{C}=1$,
\begin{align*}\prod_{i=1}^{n}\left\lbrace {}\frac{g(U_{i};\gamma)\overline{F}(U_{i};\theta)}{\int_{0}^{\infty}g(s;\gamma)\overline{F}(s;\theta)ds}\right\rbrace{^{1-\Delta_{i}}}.
\end{align*}
The value of $\theta$ that maximizes this partial likelihood depends on $\gamma$. We plug the MLE $\widehat{\gamma}_{n}$ into this partial likelihood. In Theorem~\ref{Thm1} below we show that, under standard regularity conditions, the maximizer $\widehat\theta_{n}$ of  $L_{partial}(D;\theta;\widehat\gamma_{n})$ is a consistent and asymptotically normal estimator for $\theta_{0}$.

We need the following assumptions:
\begin{enumerate}[({A}1)]
	\item \label{A1} The derivative $\frac{\partial}{\partial{\theta}}{f(t;\theta)}$ is continuous in $t$
	%	\marg{Continuous in what?} 
	for each $\theta\in\Theta$, $\frac{\partial}{\partial{\gamma}}{g(t;\gamma)}$ is continuous in $t$ for each $\gamma\in\Gamma$.
	\item \label{A2} For all $\theta\in\Theta$,
	$\arg\max_{\gamma\in\Gamma}{L(D;\theta,\gamma)}$ is unique, hence denote \\$\widehat{\gamma}(\theta)\equiv\arg\max_{\gamma\in\Gamma}{L(D;\theta,\gamma)}$. It is assumed as well that for each $\theta\in\Theta$,  $\frac{\partial}{\partial\gamma}L\left\lbrace D;\theta,\widehat{\gamma}(\theta)\right\rbrace =0$.
	\item \label{A3}For all $\gamma\in\Gamma$, $\arg\max_{\theta\in\Theta}{L_{partial}(D;\theta,\gamma)}$ is unique, hence denote \\$\widehat{\theta}(\gamma)\equiv\arg\max_{\theta\in\Theta}{L_{partial}(D;\theta,\gamma)}$.
	It is assumed as well that for each $\gamma\in\Gamma$,  $\frac{\partial}{\partial\theta}L_{partial}\left\lbrace D;\widehat{\theta}(\gamma),\gamma\right\rbrace =0$.
	
\end{enumerate}
\begin{thm}\label{Thm1}
	Let $\widehat{\gamma}_{n}$ be the maximizer of $L(D;\theta;\gamma)$ and let $\widehat{\theta}_{n}$ be the maximizer of $L_{partial}(D;\theta;\widehat\gamma_{n})$. Then, as $n\rightarrow{\infty}$,
	\begin{enumerate}[i)]
		\item \label{enum:consistent_gamma}$ \widehat{\gamma}_{n}\rightarrow{\gamma_{0}}$ in probability.
		\item \label{enum:normal_gamma}$ \sqrt{n}\left(\widehat{\gamma}_{n}-\gamma_{0}\right)\rightarrow{N\left(0,V_{\gamma_{0}}\right)}$ in distribution.
		
		\item \label{enum:consistent_theta}$\widehat{\theta}_{n}\rightarrow{\theta_{0}}$ in probability.
		\item \label{enum:normal_theta} $\sqrt{n}\left(\widehat{\theta}_{n}-\theta_{0}\right)\rightarrow{N\left(0,S_{\theta_{0},\gamma_{0}}\right)}$ in distribution.
	\end{enumerate}
	Here $V_{\gamma_{0}}$, $S_{\theta_{0}},\gamma_{0}$ are covariance matrices as defined in Appendix~\ref{l:2}. 	
\end{thm}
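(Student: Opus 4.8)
The plan is to split the four claims into two blocks. Parts (i)–(ii) (for $\widehat\gamma_n$) should reduce to classical maximum‑likelihood asymptotics for a right‑censored model, while parts (iii)–(iv) (for $\widehat\theta_n$) call for a two‑stage M‑estimation analysis in which the estimation error of the plug‑in $\widehat\gamma_n$ is propagated into the limit law of $\widehat\theta_n$; there is no reason to expect this correction to vanish, so the final variance will be of sandwich type.

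\textbf{Step 1: the $\gamma$-block.} First I would take logarithms in the likelihood $L(D;\theta,\gamma)$ and isolate the $\gamma$-dependent terms, namely $\sum_{i:\mathcal C_i\in\{1,3\}}\log g(U_i;\gamma)+\sum_{i:\mathcal C_i=2}\log\overline G(U_i;\gamma)$. This is exactly the log-likelihood of a right-censored sample for $W$: when $\mathcal C\in\{1,3\}$ we have $U=W$ observed, and when $\mathcal C=2$ the waiting time is right-censored, the censoring mechanism being $(Y,T)$-measurable and hence independent of $W$ by the model assumptions of Section~\ref{sec:Model}; in particular the maximizer involves neither $\theta$ nor $q$, matching the remark preceding the theorem. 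Under (A1)–(A3) and the unstated standard regularity conditions (identifiability of $\gamma_0$, smoothness and domination of $\log g$ and $\log\overline G$, positive-definite Fisher information $I_{\gamma_0}$, and $\gamma_0$ interior to $\Gamma$) the usual Wald and Cram\'er arguments deliver (i) and (ii) with $V_{\gamma_0}=I_{\gamma_0}^{-1}$. I would also record the asymptotically linear expansion $\sqrt n(\widehat\gamma_n-\gamma_0)=n^{-1/2}\sum_{i=1}^n\ell_{\gamma_0}(O_i)+o_P(1)$, where $O_i=(U_i,Y_i,\Delta_i)$ and $\ell_{\gamma_0}=I_{\gamma_0}^{-1}\cdot(\text{$\gamma$-score})$ has mean zero and covariance $V_{\gamma_0}$; this is the ingredient feeding the plug-in correction in Step 3.

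\textbf{Step 2: consistency of $\widehat\theta_n$.} Since $1-\Delta_i=1$ exactly when $\mathcal C_i=1$ and $\int_0^\infty g(s;\gamma)\overline F(s;\theta)\,ds=pr(W\le T)$ under $(\theta,\gamma)$, the partial likelihood rewrites as $\prod_{i:\mathcal C_i=1}r_1(U_i;\theta,\gamma)$, i.e.\ the conditional likelihood of $U$ given $\mathcal C=1$, with $q$ cancelled and $\gamma$ entering only through the normalizing integral. I would then run the standard consistency argument for M-estimators with an estimated nuisance: a uniform law of large numbers gives $n^{-1}\log L_{partial}(D;\theta,\gamma)\to M(\theta,\gamma):=pr(\mathcal C=1)\,E[\log r_1(U;\theta,\gamma)\mid\mathcal C=1]$ uniformly over $\Theta$ and a neighbourhood of $\gamma_0$; the information inequality makes $\theta_0$ the maximizer of $M(\cdot,\gamma_0)$, uniquely under identifiability of $\theta_0$ from $r_1(\cdot;\theta_0,\gamma_0)$ (which I include among the regularity conditions); and combining the uniform convergence with $\widehat\gamma_n\to\gamma_0$ from Step 1, joint continuity of $M$, and well-separatedness of the maximum yields $\widehat\theta_n\to\theta_0$ in probability.

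\textbf{Step 3: asymptotic normality of $\widehat\theta_n$, and the main obstacle.} Put $s_\theta(O;\theta,\gamma)=(1-\Delta)\frac{\partial}{\partial\theta}\log r_1(U;\theta,\gamma)$, so by (A3) $\widehat\theta_n$ solves $n^{-1}\sum_i s_\theta(O_i;\theta,\widehat\gamma_n)=0$, and $E[s_\theta(O;\theta_0,\gamma_0)]=0$ because $r_1(\cdot;\theta_0,\gamma_0)$ is the true conditional density of $U$ given $\mathcal C=1$. A one-term Taylor expansion of this equation about $(\theta_0,\gamma_0)$ — justified by the consistency from Steps 1–2 and a uniform law of large numbers for the second derivatives — should give $\sqrt n(\widehat\theta_n-\theta_0)=J_\theta^{-1}\{n^{-1/2}\sum_i s_\theta(O_i;\theta_0,\gamma_0)+J_{\theta\gamma}\sqrt n(\widehat\gamma_n-\gamma_0)\}+o_P(1)$, with $J_\theta=-E[\partial_\theta s_\theta(O;\theta_0,\gamma_0)]$ (nonsingular by assumption) and $J_{\theta\gamma}=E[\partial_\gamma s_\theta(O;\theta_0,\gamma_0)]$. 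Substituting the linear expansion of $\widehat\gamma_n$ from Step 1 turns the right-hand side into an i.i.d.\ mean-zero sum $n^{-1/2}\sum_i\xi(O_i)+o_P(1)$ with $\xi=J_\theta^{-1}\{s_\theta(\cdot;\theta_0,\gamma_0)+J_{\theta\gamma}\ell_{\gamma_0}\}$, and the central limit theorem gives (iv) with $S_{\theta_0,\gamma_0}=\mathrm{Var}(\xi)$ — a sandwich covariance assembled from $J_\theta$, $J_{\theta\gamma}$, $s_\theta$ and $\ell_{\gamma_0}$, not a plain inverse information, precisely because the partial score $s_\theta$ and the full-data $\gamma$-score share the $\mathcal C=1$ stratum and are correlated. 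The hard part is this last step: establishing $\sqrt n$-boundedness of $\widehat\theta_n-\theta_0$ (bootstrapped from consistency, nonsingular $J_\theta$, and stochastic equicontinuity of the score), controlling the Taylor remainders uniformly, differentiating carefully through the normalizing constant $\int g(s;\gamma)\overline F(s;\theta)\,ds$ by which $\gamma$ enters $s_\theta$, and carrying the cross-covariance of $s_\theta$ and $\ell_{\gamma_0}$ all the way into $S_{\theta_0,\gamma_0}$.
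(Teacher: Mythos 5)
Your proposal is correct and follows essentially the same route as the paper: the $\gamma$-part of the full likelihood separates from $\theta$ and $q$, so $\widehat\gamma_n$ is handled by standard M-/Z-estimator asymptotics (the paper invokes van der Vaart's Theorems 5.7 and 5.41), and $\widehat\theta_n$ is treated via the category-$1$ partial likelihood with a Taylor expansion of the partial score at $(\theta_0,\gamma_0)$ into which the asymptotically linear expansion of $\widehat\gamma_n$ is substituted, yielding a sandwich covariance. Your matrices $J_\theta$ and $J_{\theta\gamma}$ correspond to the paper's $E=\frac{\partial}{\partial\theta}\Phi(\theta_0,\gamma_0)$ and $B=\frac{\partial}{\partial\gamma}\Phi(\theta_0,\gamma_0)$, and your influence function $\xi$ matches the paper's $\varphi$.
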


The proof appears in Appendix~\ref{l:2}.

	\section{Nonparametric estimation}\label{sec:nonpar}

In this section we propose nonparametric estimators for the survival function of the patience time $\overline{F}$ and study its theoretical properties.
For simplicity, we restrict the estimation to a segment $[0,\tau]$ for some $\tau>0$, such that the probability of $W$ and $T$ being larger than $\tau$ is positive. This is a standard condition in survival estimation  \citep[see][Chapter 4.2]{Kosorok08}.
Note that for observations of Categories 1 and 3, the waiting-time is observed. For Category 2, only a lower bound of the waiting time is observed. Hence, the waiting time is either observed or right-censored. Therefore, estimating the waiting time distribution can be done by using standard survival analysis estimators such as the Kaplan--Meyer estimator \citep[see][Chapter 4]{klein_survival_2013}.  On the other hand, estimating the distribution of the patience time is more challenging since we cannot distinguish between the density function $f$ and the unknown function $q$.
Our goal is thus to estimate the distribution of the patience time F.

Assume that over all positive numbers, the waiting time density function $g$ is strictly positive. Recall that $h_1(t)=g(t)\overline{F}(t)$, $h_{3}(t)=g(t)\int_{0}^{t}{\left\lbrace{}1-q(s)\right\rbrace f(s)ds}$,
where the functions $h_{1}, h_{3}$ are defined as in (\ref{key}). Therefore, \begin{align}\label{eqq}\frac{h_{1}(t)}{h_{3}(t)}=\frac{\overline{F}(t)}{F(t)-\int_{0}^{t}{q(s)f(s)ds}}.
\end{align}
which is well defined as $g(t)>0$.
Reordering the terms in (\ref{eqq}), we get that
\begin{align*}
\left\lbrace{}{F(t)-\int_{0}^{t}{q(s)f(s)ds}}\right\rbrace\frac{h_{1}(t)}{h_{3}(t)}={1-{F}(t)}.
\end{align*}
Hence,
\begin{align*}
F(t)=\frac{h_{3}(t)+h_{1}(t)\int_{0}^{t}{q(s)f(s)ds}}{h_{3}(t)+h_{1}(t)}.
\end{align*}
From the definitions in (\ref{def}), it follows that
\begin{align}\label{eq:estimator}
F(t)=\frac{pr(Y=0,T<W)r_{3}(t)+pr(W\leq{T})r_{1}(t)\int_{0}^{t}{q(s)f(s)ds}}{pr(Y=0,T<W)r_{3}(t)+pr(W\leq{T})r_{1}(t)}.
\end{align}
Therefore, we propose to estimate {F(t)} by estimating the following terms:

(i) $pr(W\leq{T})$ and $pr(Y=0,T<W)$,

(ii) $r_{1}(t)$ and $r_{3}(t)$,

(iii) $A(t)\equiv\int_{0}^{t}{q(s)f(s)ds}$.

Estimating the expression in (i) can be done by the empirical estimators:
$\widehat{pr}(T\leq{W})=n^{-1}\Sigma_{i=1}^{n}(1-\Delta_{i})$, $\widehat{pr}(Y=0,W<T)=n^{-1}\Sigma_{i=1}^{n}\Delta_{i}(1-Y_{i})$.
These estimators converge, by the central limit theorem (CLT), to $pr(W\leq{T})$ and $pr(Y=0,T<W)$, respectively, at the rate of $n^{1/2}$.

Since $r_{1}$ and $r_{3}$ are density functions, they can be estimated using a kernel estimator \citep[Chapter 1.2]{tsybakov_introduction_2008}. Let $\widehat{r}_{1}$ and $\widehat{r}_{3}$ be kernel estimators of $r_{1}$ and $r_{3}$, respectively. Assume that both $r_{1}$ and $r_{3}$ belong to a Sobolev function class of order $\beta$. Then for each $t>0$, both $\widehat{r}_{1}(t)$ and  $\widehat{r}_{3}(t)$ converge at a rate of $n^{\beta/(2\beta+1)}$  \citep[see][Chapter 1.7, for both the definition of a Sobolev class and the proof]{tsybakov_introduction_2008}.

We now turn to estimate the term $A(t)={\int_{0}^{t}{q(s)f(s)ds}}$. A nonparametric estimator that we created for this term is defined and proven to be consistent in the following lemma.
\begin{lem}\label{lem:estimating}
	Let \begin{align*}   \widehat{N}_{n}(t)\equiv\frac{1}{n}\sum_{i=1}^{n}Y_{i}\Delta_{i}1\{U_{i}\leq{t}\},\quad \widehat{Y}_{n}(t)\equiv\frac{1}{n}\sum_{i=1}^{n}1\{U_{i}\geq{t}\}\,.\end{align*}
	Define $\widehat{D}_{n}(t)\equiv\int_{0}^{t}\frac{d\widehat{N}_{n}(s)}{\widehat{Y}_{n}(s)}$.
	Then $\widehat{A}(t)\equiv1-\exp\left\lbrace {-\widehat{D}_{n}(t)}\right\rbrace $ converges pointwise to $A(t)$, at a rate of $n^{1/2}$, for every $t\in{[0,\tau]}$.\end{lem}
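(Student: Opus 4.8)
\emph{Proof idea.} The plan is to (a) identify the population quantities estimated by $\widehat{N}_n$, $\widehat{Y}_n$, and hence by $\widehat{D}_n$; (b) show, via a short integration-by-parts identity, that the deterministic limit of $\widehat{A}$ is exactly $A$; and (c) obtain the $n^{1/2}$ rate from the standard Nelson--Aalen-type analysis on the compact set $[0,\tau]$. For step (a): by the law of large numbers, together with Glivenko--Cantelli for uniformity on $[0,\tau]$, $\widehat{N}_n(t)\to N(t):=pr(U\le t,\mathcal{C}=2)=\int_0^t h_2(s)\,ds$ and $\widehat{Y}_n(t)\to y(t):=pr(U\ge t)$, so that $\widehat{D}_n(t)\to D(t):=\int_0^t h_2(s)/y(s)\,ds$.

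For step (b) the key computation is to evaluate $y(t)$. Writing $y(t)=\sum_{i=1}^{3}\{pr(\mathcal{C}=i)-pr(U\le t,\mathcal{C}=i)\}=\sum_{i=1}^{3}\int_t^{\infty}h_i(s)\,ds$ and inserting the explicit forms of $h_1,h_2,h_3$ from Lemma~\ref{lem:probablities_connections}, I would combine the $h_1$- and $h_3$-integrals via the algebraic cancellation $\overline{F}(w)+\int_0^w\{1-q(x)\}f(x)\,dx=1-A(w)$, and integrate the $h_2$-integral by parts using $h_2(s)=q(s)f(s)\overline{G}(s)=A'(s)\overline{G}(s)$ and $\overline{G}(\infty)=0$, to arrive at the clean identity $y(t)=pr(U\ge t)=\overline{G}(t)\{1-A(t)\}$. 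Consequently $D(t)=\int_0^t A'(s)/\{1-A(s)\}\,ds=-\log\{1-A(t)\}$, so the deterministic limit of $\widehat{A}(t)=1-\exp\{-\widehat{D}_n(t)\}$ is $1-\exp\{-D(t)\}=A(t)$, which is the target we want. On $[0,\tau]$ the risk function is bounded away from zero, $y(\tau)=\overline{G}(\tau)\{1-A(\tau)\}\ge\overline{G}(\tau)\overline{F}(\tau)>0$, using $g>0$, $pr(W>\tau)>0$, and $pr(T>\tau)>0$; this is what makes the ratios defining $\widehat{D}_n$ stable.

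For step (c) I would write $\widehat{D}_n(t)-D(t)=\int_0^t y(s)^{-1}\,d(\widehat{N}_n-N)(s)+\int_0^t\{\widehat{Y}_n(s)^{-1}-y(s)^{-1}\}\,d\widehat{N}_n(s)$. The first term is a bounded-variation (integration-by-parts) functional of the empirical process $\sqrt{n}(\widehat{N}_n-N)$, hence $O_P(n^{-1/2})$ uniformly on $[0,\tau]$ by Donsker's theorem; the second is bounded by $\{\inf_{[0,\tau]}\widehat{Y}_n\}^{-1}\{\inf_{[0,\tau]}y\}^{-1}\sup_{[0,\tau]}|\widehat{Y}_n-y|\cdot\widehat{N}_n(\tau)$, which is $O_P(n^{-1/2})$ since $\inf_{[0,\tau]}\widehat{Y}_n\to y(\tau)>0$ and $\sup_{[0,\tau]}|\widehat{Y}_n-y|=O_P(n^{-1/2})$. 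Thus $\widehat{D}_n(t)-D(t)=O_P(n^{-1/2})$ pointwise (indeed uniformly), and since $x\mapsto1-e^{-x}$ is continuously differentiable, the delta method gives $\widehat{A}(t)-A(t)=e^{-D(t)}\{\widehat{D}_n(t)-D(t)\}+o_P(n^{-1/2})=O_P(n^{-1/2})$, yielding both consistency and the $n^{1/2}$ rate. Equivalently, one may observe that $\widehat{D}_n$ is the Nelson--Aalen estimator of the cumulative hazard $D$ built from the counting process $n\widehat{N}_n$ and risk process $n\widehat{Y}_n$, quote its $\sqrt{n}$-consistency on $[0,\tau]$ from \citet[Chapter 4]{klein_survival_2013} or \citet{Kosorok08}, and transfer through $x\mapsto1-e^{-x}$.

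The main obstacle is step (b): verifying the identity $pr(U\ge t)=\overline{G}(t)\{1-A(t)\}$, since this is precisely what makes the $1-e^{-(\cdot)}$ transform in the definition of $\widehat{A}$ land exactly on $A(t)$ rather than on some other functional of $q$ and $f$. The cancellation $\overline{F}+\int_0^{\cdot}\{1-q\}f=1-A$ together with the integration by parts on the $h_2$-term must be arranged carefully, but both are elementary once Lemma~\ref{lem:probablities_connections} is in hand; the rate argument in step (c) is then routine counting-process bookkeeping, kept painless by the restriction to $[0,\tau]$, where the risk function stays bounded below.
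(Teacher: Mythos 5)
Your proposal is correct and follows essentially the same route as the paper: identify the limits of $\widehat{N}_{n}$ and $\widehat{Y}_{n}$, establish the key identity $pr(U\geq t)=\overline{G}(t)\left\lbrace 1-A(t)\right\rbrace$, observe that the limiting integral collapses to $-\log\left\lbrace 1-A(t)\right\rbrace$, and transfer the $n^{1/2}$ rate through $x\mapsto 1-e^{-x}$ via the delta method. The only cosmetic differences are that you derive the key identity by integrating the subdensities $h_{i}$ and integrating by parts, whereas the paper decomposes the event $\{U>t\}$ directly and invokes the independence of $W$ and $(Y,T)$, and that your explicit decomposition of $\widehat{D}_{n}-D$ in step (c) merely spells out the standard Nelson--Aalen argument the paper cites.
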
 The proof is given in Appendix~\ref{p:3}.

By plugging in the estimators 
\begin{align*}\widehat{pr}(Y=0,W<T),\, \widehat{pr}(T\leq{W}),\, {\rn}_{3}(t),\, {\rn}_{1}(t),\, \widehat{A}(t),\end{align*}
to the equation in (\ref{eq:estimator}), we get that \begin{align}\label{eq:nonparametric_estimator}
\Fn_{n}(t)=\frac{\widehat{pr}(Y=0,W<T){\rn}_{3}(t)+\widehat{pr}(T\leq{W}){\rn}_{1}(t)\widehat{A}(t)}{\widehat{pr}(Y=0,W<T){\rn_{3}}(t)+\widehat{pr}(T\leq{W})\rn_{1}(t)},\end{align} is an estimator of $F(t)$. 

\begin{thm}\label{thm2}
	The estimator $\hat{F}_{n}(t)$ converges pointwise to $F(t)$ at a rate of $n^{{\beta}/{(2\beta+1)}}$, \\for every $t\in{[0,\tau]}$.
\end{thm}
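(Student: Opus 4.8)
The plan is to treat $\Fn_n(t)$ as a smooth function of five plug-in estimators and to propagate their individual rates through a delta-method argument. First I would write $F(t) = \Psi\bigl(p_3, p_1, r_3(t), r_1(t), A(t)\bigr)$, where $p_3 = pr(Y=0,T<W)$, $p_1 = pr(W\le T)$, and
\begin{align*}
\Psi(a,b,c,d,e) = \frac{a\,c + b\,d\,e}{a\,c + b\,d}.
\end{align*}
By the model assumption that all the denominators in (\ref{def}) are positive and that $g$ is strictly positive on $[0,\tau]$, the denominator $a c + b d = h_3(t) + h_1(t) = g(t)\{F(t) - A(t)\} + g(t)\overline F(t) = g(t)\{1 - A(t)\}$ is bounded away from zero on $[0,\tau]$ (using $A(t)\le F(t) < 1$ for $t\le\tau$). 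Hence $\Psi$ is continuously differentiable in a neighborhood of the true parameter vector, with bounded partial derivatives on $[0,\tau]$.

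Next I would collect the rates already established in the excerpt: $\widehat{pr}(T\le W) - p_1 = O_P(n^{-1/2})$ and $\widehat{pr}(Y=0,W<T) - p_3 = O_P(n^{-1/2})$ by the CLT; $\widehat A(t) - A(t) = O_P(n^{-1/2})$ by Lemma~\ref{lem:estimating}; and $\rn_1(t) - r_1(t) = O_P(n^{-\beta/(2\beta+1)})$, $\rn_3(t) - r_3(t) = O_P(n^{-\beta/(2\beta+1)})$ from the kernel-estimation bound under the Sobolev-$\beta$ assumption. Since $\beta/(2\beta+1) < 1/2$, the kernel terms dominate, so the whole vector of errors is $O_P(n^{-\beta/(2\beta+1)})$. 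A first-order Taylor expansion of $\Psi$ around the true vector, with the remainder controlled by the boundedness of the second derivatives on a shrinking neighborhood of the truth (which holds eventually with probability tending to one, since the denominator stays away from zero), then gives
\begin{align*}
\Fn_n(t) - F(t) = O_P\bigl(n^{-\beta/(2\beta+1)}\bigr)
\end{align*}
pointwise for every $t\in[0,\tau]$, which is the claim.

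The main obstacle, and the step deserving the most care, is verifying uniformly (over the relevant sample-dependent neighborhood) that the denominator of $\Fn_n(t)$ is bounded below, so that $\Psi$ and its derivatives are well-behaved along the expansion. Concretely, one must argue that $\widehat{pr}(Y=0,W<T)\,\rn_3(t) + \widehat{pr}(T\le W)\,\rn_1(t)$ converges to $h_3(t)+h_1(t) = g(t)\{1-A(t)\} > 0$ and therefore stays above some positive constant with probability tending to one; given that, the Taylor remainder is negligible relative to the linear term. A secondary point is to confirm that the kernel-estimator rate quoted from \citet{tsybakov_introduction_2008} is indeed available here — the densities $r_1,r_3$ are conditional densities of $U$, estimable from the subsamples with $\mathcal C=1$ and $\mathcal C=3$ respectively, whose sizes grow linearly in $n$ by the law of large numbers, so the effective sample size is $\Theta_P(n)$ and the stated rate carries over. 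Once these two facts are in hand, the conclusion follows from the delta method applied to $\Psi$.
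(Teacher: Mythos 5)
Your argument is correct and is, at heart, the same proof as the paper's: you use the identical decomposition of $\widehat{F}_{n}(t)$ in \eqref{eq:nonparametric_estimator} into the five plug-in estimators, the identical component rates ($n^{-1/2}$ for the two empirical probabilities and for $\widehat{A}(t)$ via Lemma~\ref{lem:estimating}, and $n^{-\beta/(2\beta+1)}$ for the kernel estimators of $r_{1}$ and $r_{3}$), and you let the slowest rate propagate to the ratio. Where you diverge is only the propagation mechanism: the paper does not Taylor-expand a combining map $\Psi$, but instead applies its elementary Lemma~\ref{lem:aux} on sums, products, and reciprocals of $O_{p}$-convergent quantities, whose reciprocal clause (the requirement that the limit be bounded away from zero) is exactly the denominator condition you verify explicitly through $h_{1}(t)+h_{3}(t)=g(t)\left\{1-A(t)\right\}>0$. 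The trade-off is minor: the paper's route needs no differentiability of the combining map and no second-derivative or shrinking-neighborhood control, so it is lighter for a pure rate statement; your delta-method route requires those (easily verified) smoothness checks, but it makes the positivity of the limiting denominator explicit --- which the paper leaves implicit in the hypothesis of Lemma~\ref{lem:aux} --- and, had you tracked the leading linear term, it would also deliver the pointwise asymptotic-normality refinement. Your side remark that $r_{1}$ and $r_{3}$ are conditional densities estimated from subsamples whose sizes are proportional to $n$ is likewise a point the paper glosses over when quoting the kernel rate, and your justification of it is correct.
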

The proof appears in Appendix~\ref{p:2}.

\begin{figure}[t!]
	\centering
	\includegraphics[width=\textwidth]{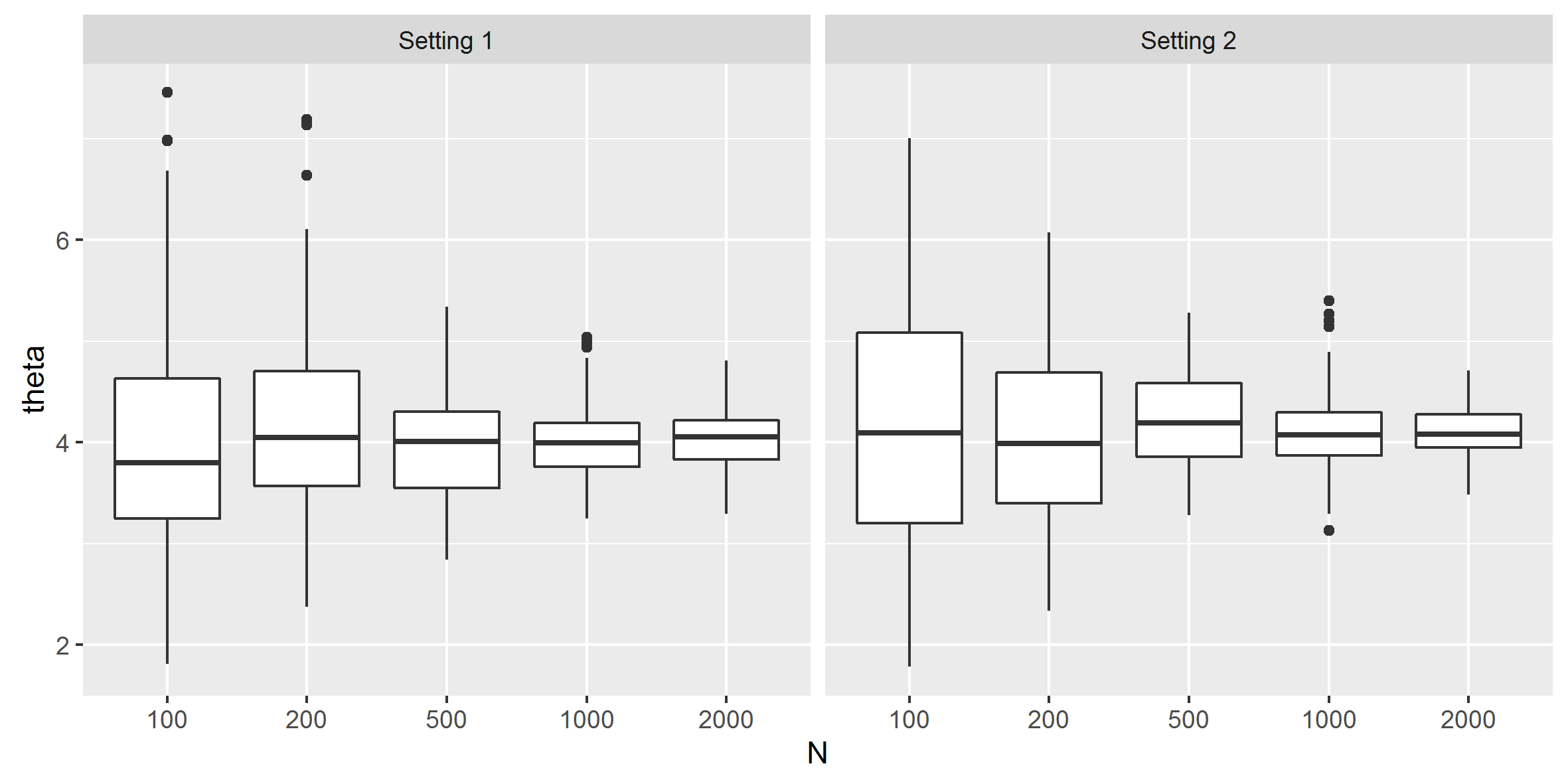}\protect\caption{Parametric estimation. Setting 1: $T\sim{Exp(4)}$ and $W\sim{Exp(10)}$, the parametric estimator converges to the true rate $4$. Setting 2: $T\sim{Weibull(4,2)}$ and $W\sim{Exp(10)}$, the parametric estimator does not converge to the true rate $4$.\label{figure:sim-par}}
\end{figure}

\section{Simulations}\label{sec:sim}
\begin{table}[!b]
	%	\label{exp}
	
	\centering
	\begin{small}
		\begin{tabular}{lccccccccccccc}
			&\multicolumn{6}{c} {Setting 1}&& \multicolumn{6}{c}{Setting 2}\\					
			&  \multicolumn{3}{c} {Parametric}& \multicolumn{3}{c}{Nonparametric}&&  \multicolumn{3}{c} {Parametric}& \multicolumn{3}{c}{Nonparametric}\\
			N & mean & median & sd & mean & median & sd && mean & median & sd & mean & median & sd \\[5pt]
			100 & 6.23	& 2.79	& 8.2		& 14.41	& 12.64	& 8.28 &&23.24	& 18.44	& 11.61		& 16.47	& 13.18	& 12.16 \\
			200 & 3.29	& 1.32	& 4.94   	& 9.77	& 7.48	& 6.51 & & 19.07	& 15.67	& 7.87   	& 10.43	& 8.47	& 7.33 \\
			500	& 1.31	& 0.77	& 1.57   	& 5.32	& 4.74	& 2.49 &&16.53	& 15.01	& 4.09   	& 4.33	& 3.95	& 2.18 \\
			1000	& 0.72	& 0.25	& 0.99   	& 3.55	& 3.36	& 1.32&&15.38	& 14.16	& 3.68   	& 2.49	& 2.2	& 1.18\\ 
			2000	& 0.36	& 0.16	& 0.53   	& 2.35	& 2.21	& 0.87&	& 14.8	& 14.18	& 1.98   		& 1.78	& 1.65	& 0.9
		\end{tabular}
	\end{small}

	\caption{MSE for Settings 1 and 2. The table summarizes the MSE that was calculated (100 times) for each of the sample sizes.
		For Setting~1, the patience time  and waiting time are distributed $T\sim{Exp(4)}$ and the  $W\sim{Exp(10)}$, respectively. In Setting~2 the patience time  and waiting time are distributed  $T\sim{Weibull(4,2)}$ and $W\sim{Exp(10)}$, respectively. As can be seen the nonparametric estimator responded with a lower MSE.}
	
	\label{exp}
	
\end{table}

\begin{figure}[h!]
	\centering
	\includegraphics[width=0.8\textwidth]{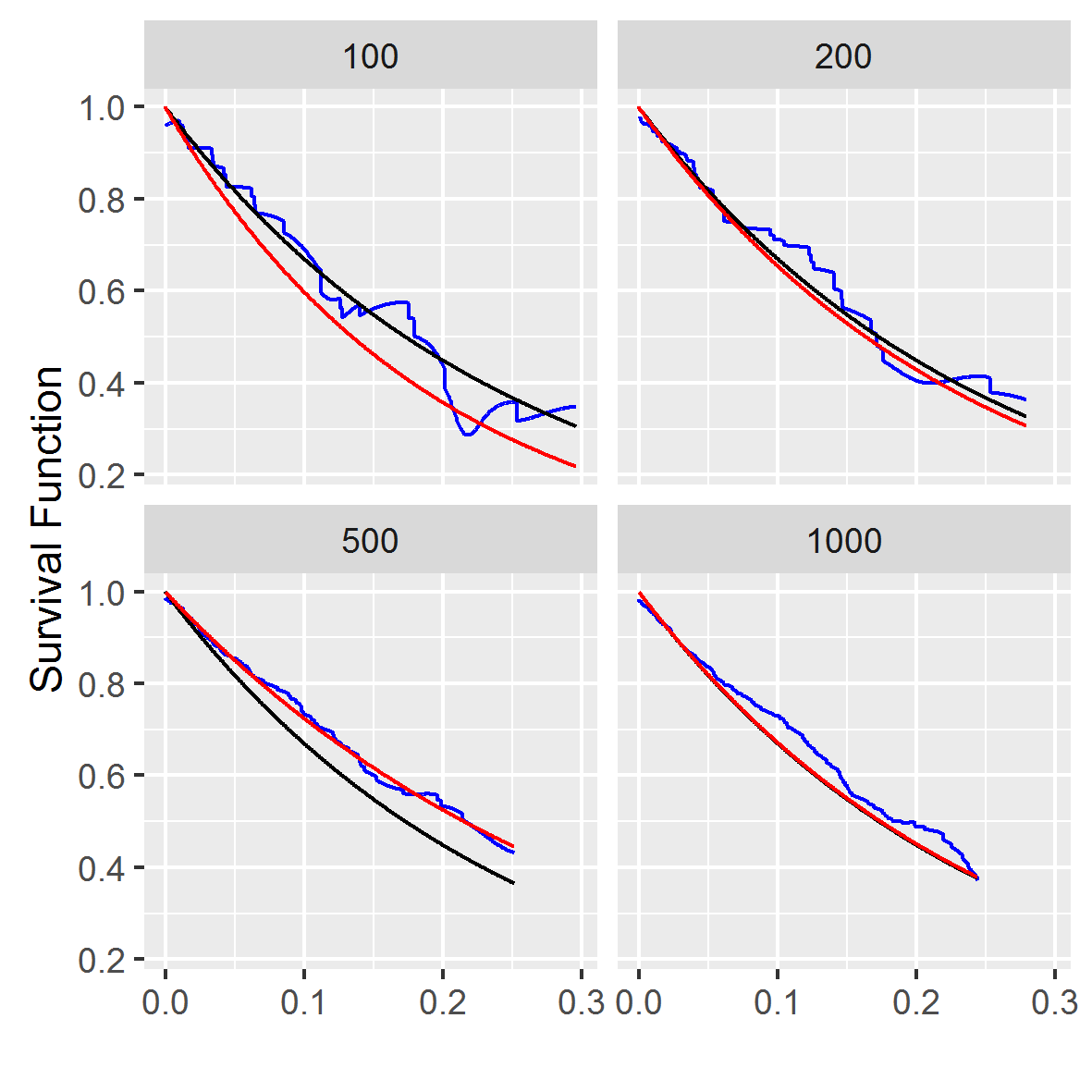}\protect\caption{Setting 1. \label{f_exp} The blue, red, and black curves represent the nonparametric, parametric, and true survival functions, respectively, for $N=100,200,500$ and $1000$.}
\end{figure}

%Simulations were done for both parametric and nonparametric estimation methods.

We study the performance of both the parametric and nonparametric estimators that were proposed in Sections~\ref{sec:par} and~\ref{sec:nonpar}, respectively. Based on the setting of the case study discussed in Section \ref{sec:case}, we consider two simulation settings. In the case study, both the exponential and Weibull distributions seem to fit well the waiting time and patience time distributions, respectively. The case study data also indicated that the mean of the waiting time $W$ is smaller then the patience time $T$.     
Thus, the two simulation settings consist of samples from exponential and Weibull distributions in which the waiting time has a smaller mean then the patience time mean. 
In the first setting, a sample was taken from the model in which the patience time $T$ follows an exponential distribution with rate $4$, and the waiting time $W$ follows an exponential distribution with rate $10$. 
In the second setting a sample was taken from a model in which the patience time $T$ follows a Weibull distribution with rate $4$ and shape $2$, and the waiting time $W$ follows an exponential distribution with rate $10$. In both settings, the unknown probability of announcement is $q(t)=\exp(-12t)$. Taking the probability of announcement to be the increasing function $q(t)=1-\exp(-12t)$ or the constant function $q(t)=0.5$ yields similar results which are omitted. Moreover, we experimented with additional numerical values. The behavior and conclusions, as reported here, remain consistent across these experiments.

In each setting, we calculated the parametric estimator  for the rate of $T$ for five different sample sizes ($N=100,200,500,1000,2000$). For each sample size, we repeated the simulation $100$ times. When using the parametric method, it was assumed that both $T$ and $W$ follow an exponential distribution with unknown parameters. Note that this assumption holds for the first setting but does not hold for the second one. In other words, the second setting is carried out under a misspecified model. The results are shown in Figure~\ref{figure:sim-par}.

\begin{figure}[t!]
	\centering
	\includegraphics[width=0.8\textwidth]{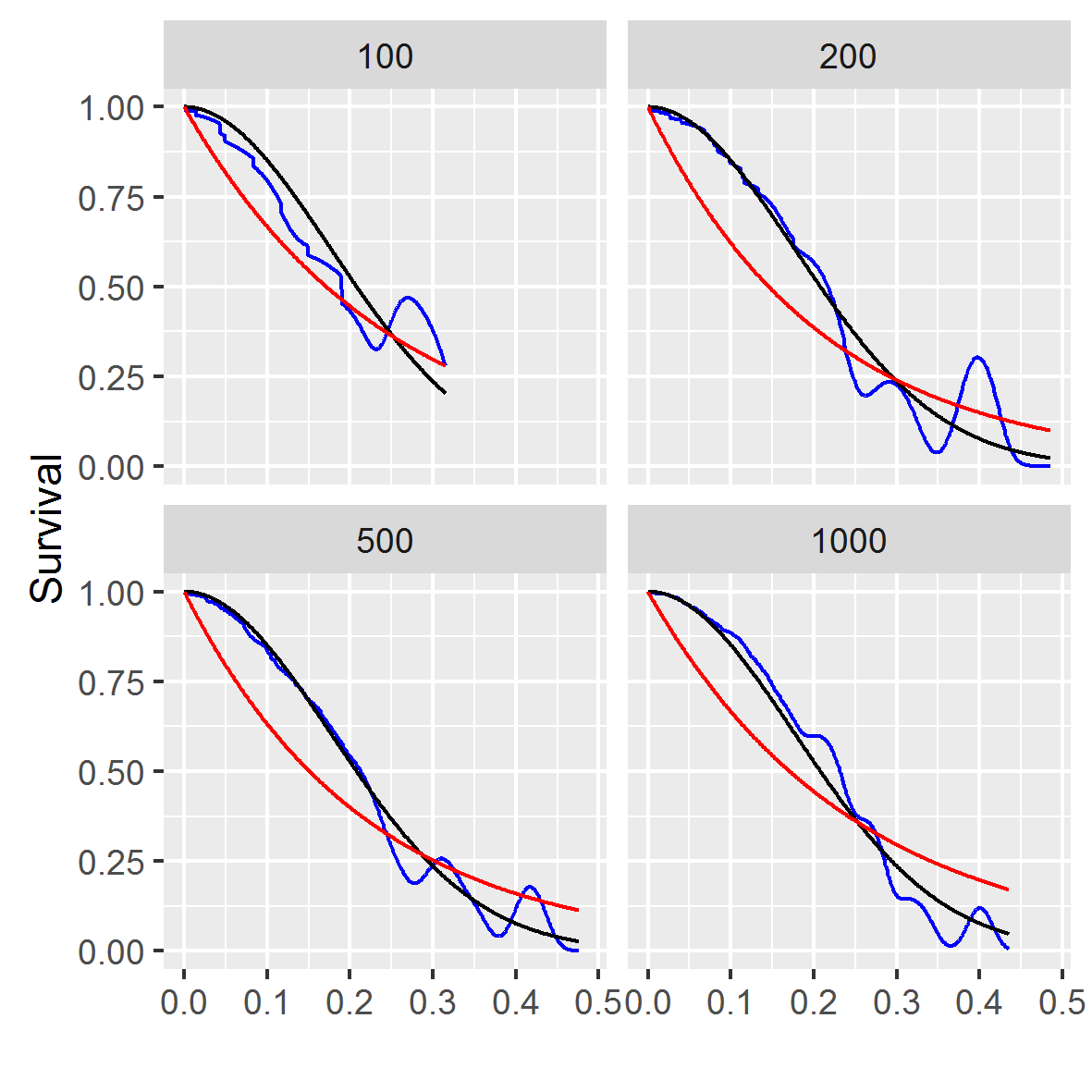}\protect\caption{Second setting. \label{f_wei} The blue, red, and black curves represent the nonparametric, parametric, and true survival functions, respectively.}
\end{figure}

We compare $\widehat{\overline{F}}_{n}$, the estimator of the survival function of $T$,  to the true survival function~$\overline{F}_{0}$. For the parametric estimation, $\widehat{\overline{F}}_{n}(t)=\exp(-\hat\theta{t})$, while for the nonparametric estimator $\widehat{\overline{F}}_{n}(t)$ is given by~\eqref{eq:nonparametric_estimator}. The comparison is done using mean square error (MSE), which is defined by \begin{align*}MSE(\widehat{\overline{F}}_{n},\overline{F}_{0})\equiv\int_{-\infty}^{\infty}{\left\lbrace \widehat{\overline{F}}_{n}(t)-\overline{F}_{0}(t)\right\rbrace ^{2}}f_{0}(t)dt\,,
\end{align*}
where $f_0$ is the density of $T$. The parametric and nonparametric survival function estimators are demonstrated in Figures \ref{f_exp} and \ref{f_wei}. Figure \ref{f_exp} represents the results of the first setting in which $T\sim{Exp(4)}$ and $W\sim{Exp(10)}$. Figure \ref{f_wei} represents the results of the second setting in which $T\sim{Weibull(4,2)}$ and $W\sim{Exp(10)}$.  Summaries of the MSE are given in Table~\ref{exp}. Not surprisingly, for Setting 1, since the parametric model is correct, the MSE is smaller for the parametric estimator. Similarly, since in Setting 2 the parametric model is incorrect, the MSE is smaller for the nonparametric estimator.

	\section{Case study}\label{sec:case}
\begin{figure}[t!]
	\centering
	\includegraphics[width=\textwidth]{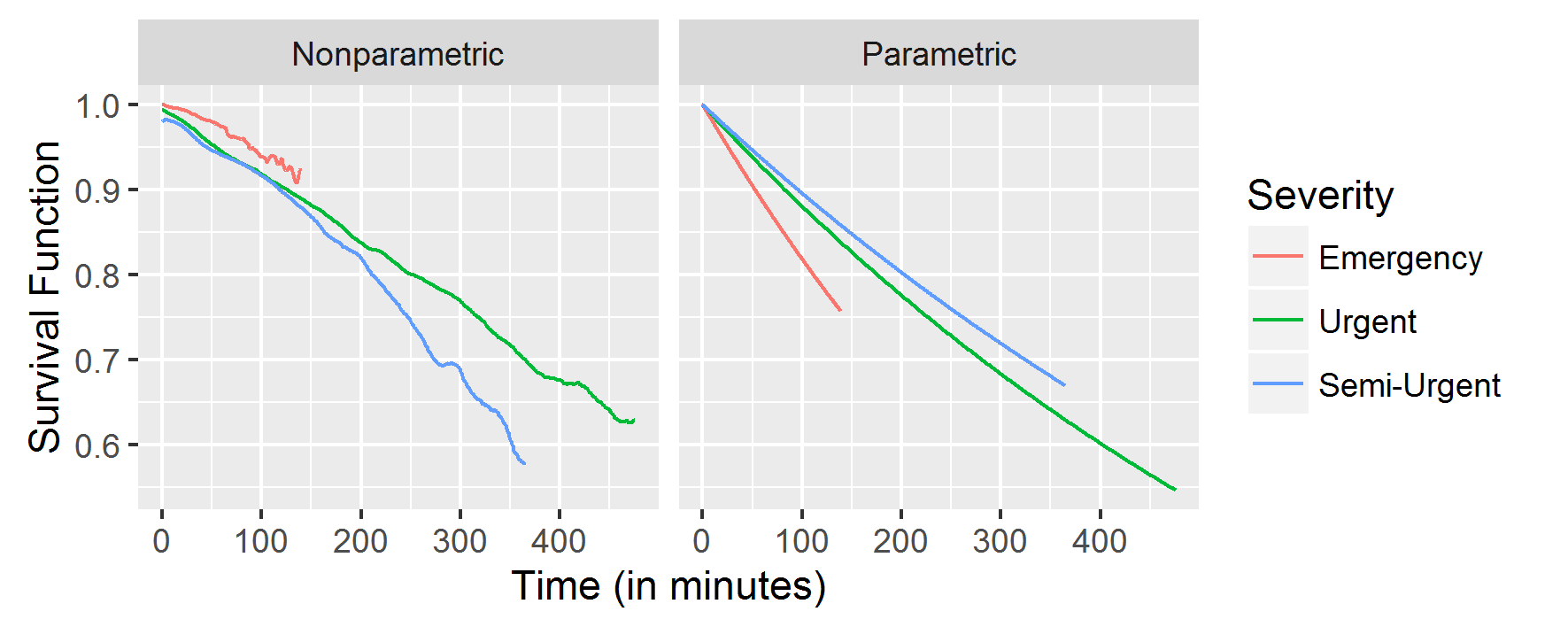}\protect\caption{Compression of the estimator for the survival function of the patience time at the three different levels of severity.\label{com_dat_1}}\end{figure}

\begin{figure}[h!]
	\centering
	\includegraphics[width=\textwidth]{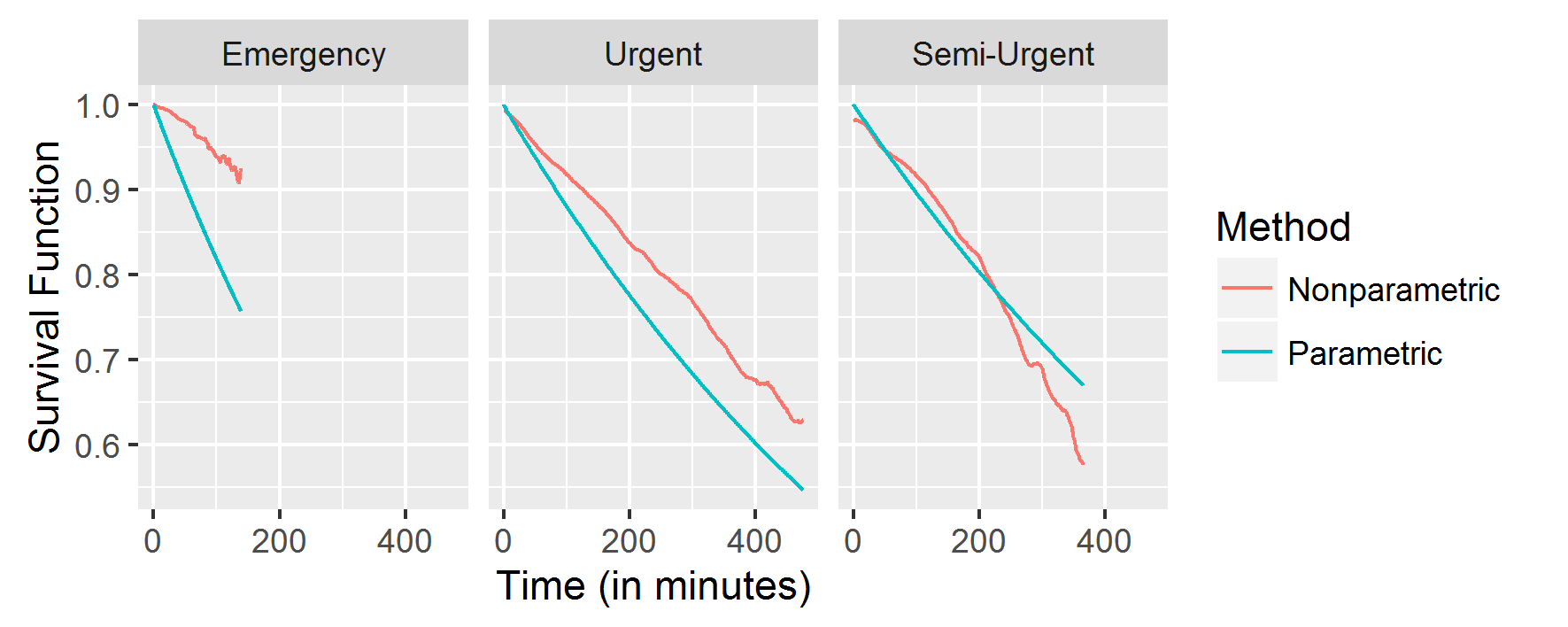}\protect\caption{Compression of the nonparametric and parametric estimators for the survival of the patience time by different levels of acuity.\label{com_dat_2}}
\end{figure}

Retrospective data were collected from all patient presentations to triage at an urban,
academic, adult-only emergency department (ED) with visits in calendar year 2008.
This data was used for the analysis in \citet{wiler2013emergency}.
The data consist of the waiting time of patients arriving at emergency rooms. One of the categories defined in this data is acuity. Since our model assumes that all patients follow the same distribution, we calculated the estimators for each level of acuity separately. We focused on the following three levels of acuity: emergency, urgent, and semi-urgent. The emergency level consist of $15106$ patients, the urgent level consist of $43600$ patients, and the semi-urgent level consist of $26541$ patients.

The data consists of the triple variables $(U_{i},\Delta_{i},Y_{i})$ described in Section \ref{sec:Model}. At each acuity level, an observation is categorized to one of the three possible categories.

Parametric and nonparametric estimators for the survival of each acuity level were calculated. The results of these estimators are given in
Figures \ref{com_dat_1} and \ref{com_dat_2}.
As can be seen in Figure 4, the nonparametric estimators of the patience time are stochastically ordered by levels of acuity. In other words, patients at the severe acuity level are less probable to loose patience than patients at the urgent level, who in turn are less prone to lose patience than patients at the semi-urgent level. The results for the parametric estimator seem unreasonable since one would expect that patients with more severe acuity level are more likely to loose patience, and loose it faster. 

\section{Discussion}\label{sec:dis}
In this paper, we consider survival data that combine observed, right-censored, and left-censored data. The setting we analyzed was that of patients who wait for treatment in an emergency department, where some patients may leave without being seen. We proposed both parametric and nonparametric estimators for the distribution of the patience time. Using simulation, we showed that when the parametric model holds, the parametric estimator estimates the patience time well. However, when the model is misspecified, the nonparametric estimator behaved better. In our case study, we also observed that the nonparametric estimator performed better.

So far, no baseline covariates were given. Novel parametric and nonparametric estimators are needed
for addressing settings that include baseline covariates.

%	In this paper we assumed that either the patience time or the waiting time is known, and likewise whether or not the patient announced leaving. More work is needed in order to generalize the proposed estimators to settings that include covariates and parametric, semiparametric, and, to the end, nonparametric estimators can be considered. 
\section{Acknowledgement}\label{sec:ack}
Y. Ritov was partially supported by the Israeli Science Foundation (grant No. 1770/15). 

Y. Goldberg was partially supported by the Israeli Science Foundation (grant No. 849/17). 
\appendix

\section{Proofs}
\subsection{\label{l:2}Proof of Lemma~\ref{lem:probablities_connections}}
\begin{align*}
pr(U\leq{t},\mathcal{C}=1)=&pr(U\leq{t},W\leq{T})
\\
=&pr(W\leq{t},W\leq{T})
\\
=&\int_{0}^{t}pr(W\leq{T}\mid{W=s})g(s)ds
%	\\=&\int_{0}^{t}pr(s\leq{T}|W=w)g(s)ds
\\=&\int_{0}^{t}pr(s\leq{T})g(s)ds
%\\=&P(W\leq{t},T>t)+P(W\leq{T},T\leq{t})
%\\=&G(t)\overline{F}(t)+\int_{0}^{t}P(S\leq{T}|T=s)f(s)ds
%\\=&G(t)\overline{F}(t)+\int_{0}^{t}P(S\leq{T})f(s)ds
%\\=&G(t)\overline{F}(t)+\int_{0}^{t}f(s)G(s)ds
\\
=&\int_{0}^{t}g(s)\overline{F}(s)ds,\
\end{align*}
where in the fourth equality we use the independence between $W$ and $(Y,T)$. 

This establishes \ref{a}).	For \ref{b}), we have
\begin{align*}
pr(U\leq{t},\mathcal{C}=2)=&pr(U\leq{t},Y=1,T<W)
\\
=&pr(T\leq{t},Y=1,T<W)
\\
=&\int_{0}^{t}pr(s<W\mid{Y}=1,T=s)q(s)f(s)ds
\\
=&\int_{0}^{t}pr(s<W)q(s)f(s)ds		
\\
=&\int_{0}^{t}q(s)f(s)\overline{G}(s)ds,\end{align*}
\\
where in the fourth equality we use the independence between $W$ and $(Y,T)$.

Finally, for \ref{c}), \begin{align*}P(U\leq{t},\mathcal{C}=3)
=&pr(U\leq{t},Y=0,T<W)
\\=&pr(W\leq{t},Y=0,T<W)
\\=&\int_{0}^{t}pr(Y=0,T<s\mid{W=s})g(s)ds
\\=&\int_{0}^{t}g(s)pr(Y=0,T<s)ds
\\=&\int_{0}^{t}g(s)\int_{0}^{s}pr(Y=0\mid{T=x})f(x)dxds
\\=&\int_{0}^{t}g(s)\int_{0}^{s}\left\lbrace{}1-q(x)\right\rbrace{}f(x)dxds,\end{align*}	
where in the fourth equality we use the independence between $W$ and $(Y,T)$.	

\subsection{\label{p:4} Proof of  Theorem~\ref{Thm1}}

The log of the full likelihood is
\begin{align*}&
\l(D;\theta,\gamma)
=\sum_{i=1}^{n}\Bigg[{\left(1-\Delta_{i}\right)\left(\log{g(U_{i};\gamma)}+
	\log{\overline{F}(U_{i};\theta)}\right)}
\\
&\qquad\qquad\qquad+\Delta_{i}Y_{i}
\left(\log{q(U_{i})}+\log{f(U_{i};\theta)}+\log{\overline{G}(U_{i};\gamma)}\right)
\\
&
\qquad\qquad\qquad\qquad+\Delta_{i}(1-Y_{i}) \left\lbrace \log{g(U_{i};\gamma)}+\log{\int_{0}^{U_{i}}(1-{q(s))f(s;\theta)ds}}\right\rbrace \Bigg].
\end{align*}
Given the data $D$,
\begin{equation}\label{proof}\begin{split}
\frac{1}{n}l(D;\theta,\gamma)&=\mathbb{P}_{n}\left\lbrace{m}_{\gamma}(U,\Delta,Y)+c(U,\Delta,Y;\theta)\right\rbrace
\equiv\frac{1}{n}\sum_{i=1}^{n}\left\lbrace{}m_{\gamma}(U_{i},\Delta_{i},Y_{i})+c(U_{i},\Delta_{i},Y_{i};\theta)\right\rbrace
\end{split}\end{equation}
where $m_{\gamma}:\mathbb{R}^{+}\times{\{0,1\}^{2}}\rightarrow{\mathbb{R}}$ is defined by
\begin{align*}   m_{\gamma}(u,\delta,y)\equiv(1-\delta)\log{g(u;\gamma)}+\Delta{y}\log{\overline{G}(u;\gamma)}+\delta(1-y)\log{g(u;\gamma)}.\end{align*}
and \begin{align*}c(u,\delta,y;\theta)\equiv&(1-\delta)\log{\overline{F}(u;\theta)}\\&+\delta{y}\left(\log{q(u)}+\log{f(u;\theta)}\right)+\delta(1-y)\log{\int_{0}^{u}\left\lbrace{}1-q(s)\right\rbrace{}f(s;\theta)ds.}\end{align*}
From assumption A\ref{A1} we obtain that, for each $\theta\in\Theta$, 	$\arg\max_{\gamma\in\Gamma}l(D;\gamma,\theta)=\arg\max_{\gamma\in\Gamma}\mathbb{P}_{n}(m_{\gamma})$. The $\gamma$ that maximizes $L(D;\theta,\gamma)$ does not depend on the value of $\theta$ or the function $p$.
Define $M_{n}(\gamma)\equiv\mathbb{P}_{n}m_{\gamma}$ and  $M(\gamma)\equiv{P}m_{\gamma}$.If, for a general function $h$, $Ph\equiv\int{h(x)dP(x)}$ and $\mathbb{P}_{n}h\equiv{n^{-1}}\sum_{i=1}^{n}{h(X_{i}}$) then by Assumptions~A\ref{A1}--A\ref{A3}, Theorem 5.7 in~\citet{VDV98} can be applied. Therefore
$\widehat{\gamma}_{n}\rightarrow{\gamma_{0}}$, in probability, which concludes the proof of  \ref{enum:consistent_gamma}).

Given the data $D$, the term $\dldp$ is a function of $\gamma$ and does not depend on the unknown function $p$. We also have
\begin{align*}
\frac{1}{n}\dldp=\mathbb{P}_{n}\psi_{\gamma}(U,\Delta,Y)\equiv\frac{1}{n}\sum_{i=1}^{n}{\psi_{\gamma}(U_{i},\Delta_{i},Y_{i}),}
\end{align*}
where $\psi_{\gamma}:\mathbb{R}^{+}\times{\{0,1\}^{2}}\rightarrow{\mathbb{R}}$ is defined as  \begin{align*}\psi_{\gamma}(u,\delta,y)\equiv(1-\delta)\frac{\frac{\partial}{\partial\gamma}g(u;\gamma)}{g(u;\gamma)}-\delta{y}\frac{\frac{\partial}{\partial\gamma}\overline{G}(u;\gamma)}{\overline{G}(u;\gamma)}+\delta(1-y)\frac{\frac{\partial}{\partial\gamma}g(u;\gamma)}{g(u;\gamma)}. \end{align*}

By Assumptions~(A\ref{A1})--(A\ref{A3}),  $\psi_{\gamma}$ satisfies the conditions of Theorem 5.41 in \citet{VDV98}  and, therefore,
\begin{align*}\sqrt{n}(\hat{\gamma}_{n}-\gamma_{0})=-(P\dot{\psi}_{\gamma_{0}})^{-1}\frac{1}{\sqrt{n}}\sum_{i=1}^{n}\psi_{\gamma_{0}}(U_{i},\Delta_{i},Y_{i})+o_{p}(1),\end{align*}
where $\dot{\psi}_{\gamma}(x)=\frac{\partial}{\partial{\gamma}}\psi_{\gamma}(x)$. Hence $\hat{\gamma}_{n}$ is a linear asymptotically normal (LAN) estimator with influence function $\varphi\equiv{-}(P\dot{\psi}_{\gamma_{0}})^{-1}\psi_{\gamma_{0}}$ . From all of the above we get that \ref{enum:normal_gamma}) is proved with $V_{\gamma_{0}}=(P\dot{\psi}_{\gamma_{0}})^{-1}{P\psi_{\gamma_{0}}\psi_{\gamma_{0}}^{t}}(P\dot{\psi}_{\gamma_{0}})^{-1}$.
\\To prove \ref{enum:consistent_theta}), note that due to the term $\log{\int_{0}^{U_{i}}(1-q(s))f(s;\theta)ds}$ that appears in  $l(D;\theta,\gamma)$, the term $\frac{\partial{l(D;\theta,\gamma)}}{\partial{\theta}}$  depends on the unknown function $p$. We therefore consider a partial likelihood function such that its derivate with respect to $\theta$ does not depend on $p$. The partial likelihood that satisfies this request is the partial likelihood of $\mathcal{C}=1$:
\begin{align*}\prod_{i=1}^{n}\left\lbrace\frac{g(U_{i};\gamma)\overline{F}(U_{i};\theta)}{\int_{0}^{\infty}g(s;\gamma)\overline{F}(s;\theta)ds}\right\rbrace{}^{1-\Delta_{i}}
\end{align*}
The log of the partial likelihood is
\begin{align*}
\l_{partial}(D;\theta,\gamma)=\sum_{i=1}^{n}\left(1-\Delta_{i}\right)\left\lbrace\log{g(U_{i};\gamma)}+\log{\overline{F}(U_{i};\theta)}-\log{\int_{0}^{\infty}g(s;\gamma)\overline{F}(s;\theta)ds}\right\rbrace.
\end{align*}
Given the data $D$, the term $l_{partial}(D;\theta,\gamma)$ is a function only of the parameters $\theta$ and $\gamma$. We also have
\begin{align*}\frac{1}{n}\l_{parital}(D;\theta,\gamma)=\mathbb{P}_{n}r_{\theta,\gamma}(U,\Delta,Y)\equiv\frac{1}{n}\sum_{i=1}^{n}{r_{\theta,\gamma}(U_{i},\Delta_{i},Y_{i})},
\end{align*}
where $r_{\theta,\gamma}:\mathbb{R}^{+}\times{\{0,1\}^{2}}\rightarrow{\mathbb{R}}$ is given by
\begin{align*}    r_{\theta,\gamma}(U,\Delta,Y)\equiv\left(1-\Delta\right)\left\lbrace\log{g(U;\gamma)}+\log{\overline{F}(U;\theta)}-\log{\int_{0}^{\infty}g(s;\gamma)\overline{F}(s;\theta)ds}\right\rbrace.
\end{align*}
Define $M_{n}(\theta,\gamma)\equiv\mathbb{P}_{n}r_{\theta,\gamma}$, and $M(\theta,\gamma)\equiv{P}r_{\theta,\gamma}$. Then,   
Theorem 5.7 in \cite{VDV98} can be applied. Therefore $\left(\widehat{\theta}_{n},\widehat{\gamma}_{n}\right)\rightarrow{\left(\theta_{0},\gamma_{0}\right)}$ in probability, and
in particular $\widehat{\theta}_{n}\rightarrow{\theta_{0}}$ in probability, and \ref{enum:consistent_theta}) is proven.
\\In order to prove \ref{enum:normal_theta}), note that
\begin{align*}\frac{1}{n}\dldt=\mathbb{P}_{n}\phi_{\theta,\gamma}(U,\Delta,Y)\equiv\frac{1}{n}\sum_{i=1}^{n}{\phi_{\theta,\gamma}(U_{i},\Delta_{i},Y_{i})},\end{align*}
where $\phi_{\theta,\gamma}:\mathbb{R}^{+}\times{\{0,1\}^{2}}\rightarrow{\mathbb{R}}$ is defined as
\begin{align*}\phi_{\theta,\gamma}(u,\delta,y)\equiv(1-\delta)\left\lbrace\frac{\frac{\partial}{\partial{\theta}}\overline{F}(u;\theta)}{\overline{F}(u;\theta)}-\frac{\frac{\partial}{\partial\theta}\int_{0}^{\infty}g(s;\gamma)\overline{F}(s;\theta)ds}{\int_{0}^{\infty}g(s;\gamma)\overline{F}(s;\theta)ds}\right\rbrace.\end{align*}

Using Assumptions~A\ref{A1}--A\ref{A3}, together from Theorem 5.41 in \citet{VDV98}, we obtain that
\begin{align*}\sqrt{n}(\hat{\gamma}_{n}-\gamma_{0})=\frac{1}{\sqrt{n}}\sum_{i=1}^{n}\left\lbrace-(P\dot{\psi}_{\gamma_{0}})^{-1}\psi_{\gamma_{0}}(U_{i},\Delta_{i},Y_{i})\right\rbrace+o_{p}(1).\end{align*}
Define $\Phi_{n}(\theta,\gamma)\equiv\mathbb{P}_{n}\phi_{\theta,\gamma}$ and note that $\Phi(\theta_{0},\gamma_{0})\equiv{P}\phi_{\theta_{0},\gamma_{0}}=0$ (since under the true parameters $P\phi_{\theta_{0},\gamma_{0}}={\frac{\partial}{\partial{\theta}}}\int{dh_{0}}={\frac{\partial}{\partial{\theta}}}1=0$, where $dh_{0}$ is the true distribution of category 1).

By Talyor's theorem,
\begin{align*} &0=\Phi_{n}(\widehat{\theta}_{n},\widehat{\gamma}_{n})
=\Phi_{n}(\theta_{0},\gamma_{0})+\left\lbrace\frac{\partial}{\partial\theta}\Phi_{n}(\theta_{0},\gamma_{0})\right\rbrace^{T}(\widehat{\theta}_{n}-\theta_{0})
+\left\lbrace\frac{\partial}{\partial\phi}\Phi_{n}(\theta_{0},\gamma_{0})\right\rbrace^{T}(\widehat{\gamma}_{n}-\gamma_{0})+o_{p}\left(n^{-{1}/{2}}\right)
\\
\Rightarrow &{0=\sqrt{n}\Phi_{n}(\theta_{0},\gamma_{0})+\left\lbrace\frac{\partial}{\partial\theta}\Phi_{n}(\theta_{0},\gamma_{0})\right\rbrace^{T}\sqrt{n}\left(\widehat{\theta}_{n}-\theta_{0}\right)+\left\lbrace\frac{\partial}{\partial\gamma}\Phi_{n}(\theta_{0},\gamma_{0})\right\rbrace^{T}\sqrt{n}\left\lbrace\widehat{\gamma}_{n}-\gamma_{0}\right\rbrace+o_{p}(1)}.
\\
&=\sqrt{n}\Phi_{n}(\theta_{0},\gamma_{0})+\left\lbrace\frac{\partial}{\partial\theta}\Phi_{n}(\theta_{0},\gamma_{0})\right\rbrace^{T}\sqrt{n}\left(\widehat{\theta}_{n}-\theta_{0}\right)
\\
&\quad -\left\lbrace\frac{\partial}{\partial\gamma}\Phi_{n}(\theta_{0},\gamma_{0})\right\rbrace^{T}(P\dot{\psi}_{\gamma_{0}})^{-1}\frac{1}{\sqrt{n}}\sum_{i=1}^{n}\psi_{\gamma_{0}}(U_{i},\Delta_{i},Y_{i})+o_{p}(1)
\\
&=\frac{1}{\sqrt{n}}\sum_{i=1}^{n}\left[\phi_{\theta_{0},\gamma_{0}}(U_{i},\Delta_{i},Y_{i})-\left\lbrace\frac{\partial}{\partial\gamma}\Phi_{n}(\theta_{0},\gamma_{0})\right\rbrace^{T}(P\dot{\psi}_{\gamma_{0}})^{-1}\psi_{\gamma_{0}}(U_{i},\Delta_{i},Y_{i})\right]
\\&+\left\lbrace\frac{\partial}{\partial\theta}\Phi_{n}(\theta_{0},\gamma_{0})\right\rbrace^{T}\sqrt{n}(\widehat{\theta}_{n}-\theta_{0})+o_{p}(1).
\end{align*}
Elementary arithmetic leads to
\begin{align*}
& \sqrt{n}\left(\widehat{\theta}_{n}-\theta_{0}\right)=-\left(EE^{T}\right)^{-1}\frac{1}{\sqrt{n}}\sum_{i=1}^{n}\left\lbrace\phi_{\theta_{0},\gamma_{0}}(U_{i},\Delta_{i},Y_{i})-B^{T}(P\dot{\psi}_{\gamma_{0}})^{-1}\psi_{\gamma_{0}}(U_{i},\Delta_{i},Y_{i})\right\rbrace+o_{p}(1),
\end{align*}
where $B\equiv\frac{\partial}{\partial\gamma}\Phi(\theta_{0},\gamma_{0})$, and $E\equiv\frac{\partial}{\partial\theta}\Phi(\theta_{0},\gamma_{0})$.
Hence, $\widehat{\theta}_{n}$ is a LAN estimator with the influence function
\begin{align*}\varphi=-\left(EE^{T}\right)^{-1}\frac{1}{\sqrt{n}}\left\lbrace\phi_{\theta_{0},\gamma_{0}}-B^{T}(P\dot{\psi}_{\gamma_{0}})^{-1}\psi_{\gamma_{0}}\right\rbrace.\end{align*} Summarizing,

$\sqrt{n}(\widehat{\theta}_{n}-\theta_{0})\rightarrow{N(0,S_{\theta_{0},\gamma_{0}})}$ in distribution, where
$S_{\theta_{0},\gamma_{0}}=P\varphi\varphi^{T}$, hence, \ref{enum:normal_theta}) is proven.

\subsection{\label{p:3}Proof of Lemma~\ref{lem:estimating}}

\begin{proof}
	We use similar arguments to those in the proof of the convergence of the Nelson--Aalen estimator to a cumulative hazard function \citep[see][page 240]{Kosorok08}. Hence we have that
	
	\begin{align*}\sqrt{n}\begin{Bmatrix}\widehat{N}_{n}(t)-N(t)\\\widehat{Y}_{n}(t)-Y(t)\end{Bmatrix}=O_{p}(1),\end{align*}
	
	where $N(t)=pr(Y=1, T\leq{W},T\leq{t})\quad \text{and}\quad Y(t)=pr(U\geq{t})$.

	Since, by Section \ref{sec:Model},  
	\begin{align*}\{U>t\}=\{Y=0,W>t\}\cup{\{Y=1,W>t, T>t\}}.\end{align*}
	Hence,
	\begin{align*}pr(U>t)&=pr(Y=0,W>t)+pr(Y=1,W>t,T>t)\\&=pr(W>t)\left\lbrace{}pr(Y=0)+pr(Y=1,T>t)\right\rbrace\\&=pr(W>t)\big(pr(Y=0)+pr(Y=1)-pr(Y=1,T\leq{t})\big)\\&=pr(W>t)\left\lbrace{}1-pr(Y=1,T\leq{t})\right\rbrace\\&=\overline{G}(t)\left\lbrace{}1-\int_{0}^{t}{q(s)f(s)ds}\right\rbrace,\end{align*}
	where in the second equality we use the independence between $W$ and $(Y,T)$. 
	
	By Lemma~\ref{lem:probablities_connections} \begin{align*}pr(Y=1,T\leq{W},T\leq{t})=\int_{0}^{t}{q(s)f(s)\overline{G}(s)}ds.\end{align*}
	Using the continuity of the derivative operator and of the integral operator, we get that  
	% \marg{What is that?} 
	\begin{align*}\widehat{D}_{n}(t)\rightarrow \int_{0}^{t}\frac{q(s)f(s)\overline{G}(s)}{\overline{G}(s)\left\lbrace{}1-\int_{0}^{s}{q(x)f(x)ds}\right\rbrace}ds\end{align*}
	in probability.

	Note that
	\begin{align*}&\int_{0}^{t}{\frac{\overline{G}(s)q(s)f(s)}{\overline{G}(s)\left\lbrace{}1-\int_{0}^{s}{q(x)f(x)dx}\right\rbrace}ds}=\int_{0}^{t}{\frac{q(s)f(s)}{\left\lbrace{}1-\int_{0}^{s}{q(x)f(x)dx}\right\rbrace}ds}
	\\
	&\quad =-\int_{0}^{t}{\frac{\partial}{\partial{s}}\log{\left\lbrace{}1-\int_{0}^{s}{q(x)f(x)dx}\right\rbrace{}}ds}=-\log{\left\lbrace{}1-\int_{0}^{t}{q(s)f(s)ds}\right\rbrace{}}.\end{align*}
	Hence, by the delta method \citep[see][Chapter 12.2.2.2]{Kosorok08}, we get that\begin{align*}\widehat{D}_{n}(t)\rightarrow -\log{\left\lbrace{}1-\int_{0}^{t}{q(s)f(s)ds}\right\rbrace{}}\end{align*} in probability, with convergence at rate $n^{1/2}$.
	
	Since $y=-\log{(1-x)}\Leftrightarrow{x=1-\exp{(-y)}}$ and
	by the continuous mapping theorem \citep[see][Theorem 7.7]{Kosorok08}, we get  that $\widehat{A}(t)=1-\exp(-\widehat{D}_{n}(t))$ is an estimator of $\int_{0}^{t}{q(s)f(s)ds}$, at the rate of $n^{1/2}$ as desired.
\end{proof}

\subsection{\label{p:2}Proof of Theorem~\ref{thm2}}
For the proof of Theorem~\ref{thm2}, we need the following lemma, which is elementary hence stated without proof.
\begin{lem}\label{lem:aux}
	Let $(a_{n})_{n=1}^{\infty}, (b_{n]})_{n=1}^{\infty}$ be positive sequences. If $X_{n}-X=O_{p}(a_{n})$ and        \\$Y_{n}-Y=O_{p}(b_{n})$, as well as $P(\lvert{X}\rvert>l)=1$ for some $l>0$. Then we have:
	\begin{enumerate}[i)]
		\item\label{d} $X_{n}+Y_{n}-(X+Y)=O_{p}(a_{n}\vee{b_{n}}),$
		\item\label{e} $X_{n}Y_{n}-XY=O_{p}(a_{n}\vee{b_{n}}),$
		\item\label{g} $\frac{1}{X_{n}}-\frac{1}{X}=O_{p}(a_{n}).$
	\end{enumerate}
\end{lem}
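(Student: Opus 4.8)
The plan is to verify the three claims directly from the elementary calculus of the $O_p$ symbol: a sum of an $O_p(a_n)$ term and an $O_p(b_n)$ term is $O_p(a_n\vee b_n)$, and the product of an $O_p(a_n)$ term with an $O_p(1)$ term is $O_p(a_n)$. Throughout I work in the relevant regime $a_n\to 0$, $b_n\to 0$ (these are convergence rates, hence in particular bounded), and I record two facts used repeatedly: every genuine random variable is tight, so the limits satisfy $X=O_p(1)$ and $Y=O_p(1)$; and $X_n-X=O_p(a_n)=o_p(1)$ forces $X_n\parrow X$. With this in hand, part \ref{d}) is immediate: writing $(X_n+Y_n)-(X+Y)=(X_n-X)+(Y_n-Y)$, the two summands are $O_p(a_n)$ and $O_p(b_n)$, and their sum is $O_p(a_n\vee b_n)$ because $a_n\vee b_n$ and $a_n+b_n$ differ by a bounded factor.

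For part \ref{e}) I would use the decomposition
\[
X_nY_n-XY=(X_n-X)(Y_n-Y)+(X_n-X)Y+X(Y_n-Y).
\]
The cross term is $O_p(a_nb_n)$, and since $a_n,b_n$ are bounded one has $a_nb_n\le C\,(a_n\vee b_n)$, so it is absorbed into $O_p(a_n\vee b_n)$; the term $(X_n-X)Y$ is $O_p(a_n)$ because $Y$ is tight, and $X(Y_n-Y)$ is $O_p(b_n)$ because $X$ is tight. Summing the three pieces gives $O_p(a_n\vee b_n)$.

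For part \ref{g}) I write
\[
\frac{1}{X_n}-\frac{1}{X}=\frac{X-X_n}{X\,X_n},
\]
whose numerator is $O_p(a_n)$, so it remains to show $1/(X X_n)=O_p(1)$. The hypothesis $P(\lvert X\rvert>l)=1$ gives $1/\lvert X\rvert<1/l$ surely, and from $\lvert X_n\rvert\ge\lvert X\rvert-\lvert X_n-X\rvert\ge l-o_p(1)$ it follows that for every $\varepsilon>0$ there is an $N$ with $P(\lvert X_n\rvert>l/2)>1-\varepsilon$ for all $n\ge N$; on that event $1/\lvert X X_n\rvert<2/l^2$. Hence $1/(X X_n)=O_p(1)$, and multiplying by the $O_p(a_n)$ numerator yields the claim.

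The only genuinely delicate step is part \ref{g}): one cannot bound $1/X_n$ surely but only on an event whose probability tends to one, so the argument must combine the almost-sure lower bound on $\lvert X\rvert$ with the consistency $X_n\parrow X$ to keep $X_n$ away from zero with high probability. The assumption $P(\lvert X\rvert>l)=1$ is precisely what supplies this control, whereas parts \ref{d}) and \ref{e}) require nothing beyond the additivity and product rules for $O_p$ together with the automatic tightness of the limits $X$ and $Y$.
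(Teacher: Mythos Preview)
Your argument is correct and is exactly the sort of routine verification the paper has in mind: the authors state the lemma ``is elementary hence stated without proof,'' so there is no proof in the paper to compare against, and your decomposition-and-tightness approach is the natural way to fill the gap. The one point worth flagging is that your added hypothesis that $a_n,b_n$ are bounded (indeed $\to 0$) is genuinely needed for part~\ref{e}) to go through---without it the cross term $O_p(a_nb_n)$ need not be $O_p(a_n\vee b_n)$---but this holds in every application the paper makes of the lemma, where the rates are $n^{-1/2}$ and $n^{-\beta/(2\beta+1)}$.
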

\begin{proof}[Proof of Theorem~\ref{thm2}]
	Recall that	 \begin{align*}\label{eq:nonparametric_estimator}
	\Fn_{n}(t)=\frac{\widehat{pr}(Y=0,W<T){\rn}_{3}(t)+\widehat{pr}(T\leq{W}){\rn}_{1}(t)\widehat{A}(t)}{\widehat{pr}(Y=0,W<T){\rn_{3}}(t)+\widehat{pr}(T\leq{W})\rn_{1}(t)}\end{align*} is an estimator of $F(t)$.
	\\	For all $t>0$,
	
	\begin{align*}
	\widehat{pr}(Y=0,W<T)-{pr}(Y=0,W<T)=O_{p}(n^{-{1}/{2}})\,,
	\end{align*}
	and
	\begin{align*}
	\widehat{pr}(T\leq{W})-{pr}(T\leq{W})=O_{p}(n^{-{1}/{2}})\,,
	\end{align*}
	as both are empirical distribution estimators. By Chapter~1.7 of \citet{tsybakov_introduction_2008}, \\for all $t>0$, ${\rn}_{j}(t)-r_{j}(t)=O_{p}(n^{-{\beta}/{(2\beta+1)}})$, for $j=1,3$.  By Lemma~\ref{lem:estimating},
	$\widehat{A}(t)-A(t)=O_{p}(n^{-{1}/{2}})$.
	
	By Lemma~\ref{lem:aux}.\ref{e}) 
	\begin{align*}
	\widehat{pr}(Y=0,W<T){\rn}_{3}(t)-{pr}(Y=0,W<T){\rn}_{3}(t)=O_{p}(n^{-{\beta}/{(2\beta+1})})\,,
	\end{align*}
	and
	\begin{align*}
	&\widehat{pr}(T\leq{W}){\rn}_{1}(t)\widehat{A}(t)-{pr}(T\leq{W}){\rn}_{1}(t){A}(t)=O_{p}(n^{-{\beta}/{(2\beta+1)}})\,.
	\end{align*}
	
	Therefore, by Lemma~\ref{lem:aux}.\ref{d}),
	\begin{align*}
	&\widehat{pr}(Y=0,W<T){\rn}_{3}(t)+\widehat{pr}(T\leq{W}){\rn}_{1}(t)\widehat{A}(t)-{pr}(Y=0,W<T){r}_{3}(t)-{pr}(T\leq{W}){r}_{1}(t){A}(t)\\&=O_{p}(n^{-{\beta}/{(2\beta+1)}})\,.
	\end{align*}
	Similarly, 
	\begin{align*}&\widehat{pr}(Y=0,W<T){\rn}_{3}(t)+\widehat{pr}(T\leq{W}){\rn}_{1}(t)-{pr}(Y=0,W<T){r}_{3}(t)-{pr}(T\leq{W}){r}_{1}(t)
	\\
	&=O_{p}(n^{-{\beta}/{(2\beta+1)}}).\end{align*}
	By Lemma~\ref{lem:aux}.\ref{g}),
	\begin{align*}
	&\frac{1}{\widehat{pr}(Y=0,W<T){\rn}_{3}(t)+\widehat{pr}(T\leq{W}){\rn}_{1}(t)}-\frac{1}{{pr}(Y=0,W<T){r}_{3}(t)+{pr}(T\leq{W}){r}_{1}(t)}
	\\
	&=O_{p}(n^{-{\beta}/{(2\beta+1)}}).\end{align*}
	By Lemma~\ref{lem:aux}.\ref{d}) again,
	\begin{align*}
	&\frac{\widehat{pr}(Y=0,W<T){\rn}_{3}(t)+\widehat{pr}(T\leq{W}){\rn}_{1}(t)\widehat{A}(t)}{\widehat{pr}(Y=0,W<T){\rn_{3}}(t)+\widehat{pr}(T\leq{W})\rn_{1}(t)}-\frac{{pr}(Y=0,W<T){r}_{3}(t)+{pr}(T\leq{W}){r}_{1}(t){A}(t)}{{pr}(Y=0,W<T){r_{3}}(t)+{pr}(T\leq{W})r_{1}(t)}
	\\
	&=O_{p}(n^{-{\beta}/{(2\beta+1)}}).
	\end{align*}
	In other words, $\Fn_{n}(t)-F(t)=O_{p}(n^{-{\beta}/{(2\beta+1)}})$, which complete the proof of Theorem~\ref{thm2}.
\end{proof}

\bibliographystyle{plainnat}
%\bibliography{LWBS_2}

\begin{thebibliography}{22}
	\providecommand{\natexlab}[1]{#1}
	\providecommand{\url}[1]{\texttt{#1}}
	\expandafter\ifx\csname urlstyle\endcsname\relax
	\providecommand{\doi}[1]{doi: #1}\else
	\providecommand{\doi}{doi: \begingroup \urlstyle{rm}\Url}\fi
	
	\bibitem[Aksin et~al.(2007)Aksin, Armony, and Mehrotra]{aksin2007modern}
	Z.~Aksin, M.~Armony, and V.~Mehrotra.
	\newblock The modern call center: A multi-disciplinary perspective on
	operations management research.
	\newblock \emph{Production and operations management}, 16\penalty0
	(6):\penalty0 665--688, 2007.
	
	\bibitem[Baker et~al.(1991)Baker, Stevens, and Brook]{baker_patients_1991}
	D.~W. Baker, C.~D. Stevens, and R.~H. Brook.
	\newblock Patients who leave a public hospital emergency department without
	being seen by a physician. {Causes} and consequences.
	\newblock \emph{JAMA}, 266\penalty0 (8):\penalty0 1085--1090, 1991.
	
	\bibitem[Batt and Terwiesch(2015)]{batt2015waiting}
	R.~J. Batt and C.~Terwiesch.
	\newblock Waiting patiently: An empirical study of queue abandonment in an
	emergency department.
	\newblock \emph{Management Science}, 61\penalty0 (1):\penalty0 39--59, 2015.
	
	\bibitem[Bolandifar et~al.(2014)Bolandifar, DeHoratius, Olsen, and
	Wiler]{bolandifar2014modeling}
	E.~Bolandifar, N.~DeHoratius, T.~Olsen, and J.~L. Wiler.
	\newblock Modeling the behavior of patients who leave the {ED} without being
	seen.
	\newblock \emph{Chicago Booth Research Paper}, \penalty0 (12--14), 2014.
	
	\bibitem[Brown et~al.(2005)Brown, Gans, Mandelbaum, Sakov, Shen, Zeltyn, and
	Zhao]{brown2005statistical}
	L.~Brown, N.~Gans, A.~Mandelbaum, A.~Sakov, H.~Shen, S.~Zeltyn, and L.~Zhao.
	\newblock Statistical analysis of a telephone call center: A queueing-science
	perspective.
	\newblock \emph{Journal of the American statistical association}, 100\penalty0
	(469):\penalty0 36--50, 2005.
	
	\bibitem[Gans et~al.(2003)Gans, Koole, and Mandelbaum]{gans2003telephone}
	N.~Gans, G.~Koole, and A.~Mandelbaum.
	\newblock Telephone call centers: Tutorial, review, and research prospects.
	\newblock \emph{Manufacturing \& Service Operations Management}, 5\penalty0
	(2):\penalty0 79--141, 2003.
	
	\bibitem[Hunt et~al.(2006)Hunt, Weber, Showstack, Colby, and
	Callaham]{hunt_characteristics_2006}
	K.~A. Hunt, E.~J. Weber, J.~A. Showstack, D.~C. Colby, and M.~L. Callaham.
	\newblock Characteristics of frequent users of emergency departments.
	\newblock \emph{Annals of Emergency Medicine}, 48\penalty0 (1):\penalty0 1--8,
	2006.
	
	\bibitem[Kaplan and Meier(1958)]{kaplan1958nonparametric}
	Edward~L Kaplan and Paul Meier.
	\newblock Nonparametric estimation from incomplete observations.
	\newblock \emph{Journal of the American statistical association}, 53\penalty0
	(282):\penalty0 457--481, 1958.
	
	\bibitem[Klein and Moeschberger(2013)]{klein_survival_2013}
	J.~P. Klein and M.~L. Moeschberger.
	\newblock \emph{Survival Analysis: Techniques for Censored and Truncated Data}.
	\newblock Springer, New York, 2013.
	
	\bibitem[Kosorok(2008)]{Kosorok08}
	M.~R. Kosorok.
	\newblock \emph{Introduction to Empirical Processes and Semiparametric
		Inference}.
	\newblock Springer, New York, 2008.
	
	\bibitem[Mandelbaum and Zeltyn(2007)]{mandelbaum2007service}
	A.~Mandelbaum and S.~Zeltyn.
	\newblock Service engineering in action: The {Palm}/{Erlang}-{A} queue, with
	applications to call centers.
	\newblock \emph{In Advances in Services Innovations, D. Spath and K. P.
		Fahnrich (eds.). Springer, New York,}, pages 17--45, 2007.
	
	\bibitem[Mandelbaum and Zeltyn(2013)]{mandelbaum2013data}
	A.~Mandelbaum and S.~Zeltyn.
	\newblock Data-stories about (im) patient customers in tele-queues.
	\newblock \emph{Queueing Systems}, 75\penalty0 (2-4):\penalty0 115--146, 2013.
	
	\bibitem[Minn et~al.(2001)Minn, Pollock, Garzarella, Dahl, Kun, Ducore,
	Shibata, Kepner, and Fisher]{minn2001surveillance}
	A~Yuriko Minn, Brad~H Pollock, Linda Garzarella, Gary~V Dahl, Larry~E Kun,
	Jonathan~M Ducore, Atsuko Shibata, James Kepner, and Paul~G Fisher.
	\newblock Surveillance neuroimaging to detect relapse in childhood brain
	tumors: a pediatric oncology group study.
	\newblock \emph{Journal of clinical oncology}, 19\penalty0 (21):\penalty0
	4135--4140, 2001.
	
	\bibitem[Nah(2004)]{nah2004study}
	F.~Nah.
	\newblock A study on tolerable waiting time: how long are web users willing to
	wait?
	\newblock \emph{Behaviour \& Information Technology}, 23\penalty0 (3):\penalty0
	153--163, 2004.
	
	\bibitem[Prentice and Gloeckler(1978)]{prentice1978regression}
	Ross~L Prentice and Lynn~A Gloeckler.
	\newblock Regression analysis of grouped survival data with application to
	breast cancer data.
	\newblock \emph{Biometrics}, pages 57--67, 1978.
	
	\bibitem[Tsybakov(2008)]{tsybakov_introduction_2008}
	A.~B. Tsybakov.
	\newblock \emph{Introduction to Nonparametric Estimation}.
	\newblock Springer Science \& Business Media, New York, 2008.
	
	\bibitem[van~der Vaart(2000)]{VDV98}
	A.~W. van~der Vaart.
	\newblock \emph{Asymptotic Statistics}.
	\newblock Cambridge University Press, Cambridge, 2000.
	
	\bibitem[Whitehead(1989)]{whitehead1989analysis}
	John Whitehead.
	\newblock The analysis of relapse clinical trials, with application to a
	comparison of two ulcer treatments.
	\newblock \emph{Statistics in medicine}, 8\penalty0 (12):\penalty0 1439--1454,
	1989.
	
	\bibitem[Wiler et~al.(2013)Wiler, Bolandifar, Griffey, Poirier, and
	Olsen]{wiler2013emergency}
	J.~L. Wiler, E.~Bolandifar, R.~T. Griffey, R.~F. Poirier, and T.~Olsen.
	\newblock An emergency department patient flow model based on queueing theory
	principles.
	\newblock \emph{Academic Emergency Medicine}, 20\penalty0 (9):\penalty0
	939--946, 2013.
	
	\bibitem[Wilson et~al.(1968)Wilson, Jungner, Organization,
	et~al.]{wilson1968principles}
	James Maxwell~Glover Wilson, Gunnar Jungner, World~Health Organization, et~al.
	\newblock Principles and practice of screening for disease.
	\newblock 1968.
	
	\bibitem[Yom-Tov et~al.(2018)Yom-Tov, Rafaeli, Ashtar, Altman, Westphal,
	Natapov, and Barkay]{YomTov2017}
	G.~B. Yom-Tov, A.~Rafaeli, S.~Ashtar, D.~Altman, M.~Westphal, M.~Natapov, and
	N.~Barkay.
	\newblock Customer emotion in chat services: Automatic identification and new
	insights.
	\newblock \emph{Under review}, 2018.
	
	\bibitem[Zelen and Feinleib(1969)]{zelen1969theory}
	Marvin Zelen and Manning Feinleib.
	\newblock On the theory of screening for chronic diseases.
	\newblock \emph{Biometrika}, 56\penalty0 (3):\penalty0 601--614, 1969.
	
\end{thebibliography}

\end{document}